\newcommand{\bbM}{\mathbb{M}}
\newcommand{\bbE}{\mathbb{E}}
\newcommand{\bbV}{\mathbb{V}}
\newcommand{\Var}{\bbV{\rm ar}}
\newcommand{\bbP}{\mathbb{P}}
\newcommand{\bbR}{\mathbb{R}}
\newcommand{\wh}{\widehat}
\newcommand{\cA}{\mathcal A}
\newcommand{\cC}{\mathcal C}
\newcommand{\cB}{\mathcal B}
\newcommand{\cE}{\mathcal E}
\newcommand{\cN}{\mathcal N}
\newcommand{\cF}{\mathcal F}
\newcommand{\cR}{\mathcal R}
\newcommand{\rmc}{{\rm c}}
\newcommand{\rmd}{{\rm d}}
\newcommand{\rme}{{\rm e}}
\newcommand{\rmB}{{\rm B}}
\newcommand{\wt}{\widetilde}
\newtheorem{theorem}{Theorem}[section]
\newtheorem{lem}[theorem]{Lemma}
\newtheorem{prop}[theorem]{Proposition}
\newtheorem{thm}[theorem]{Theorem}
\newtheorem{cor}[theorem]{Corollary}
\numberwithin{equation}{section}
\title{More on the Structure of Extreme Level Sets in Branching Brownian Motion}
\author
{Aser Cortines\thanks{aser.cortinespeixoto@math.uzh.ch, lhartung@uni-mainz.de,  oren.louidor@gmail.com.} \\Universit\"at Z\"urich \and Lisa Hartung\footnotemark[1]\\ 
Universit\"at Mainz\and Oren Louidor\footnotemark[1]\\Technion, Israel}
\date{}
\begin{document}

\maketitle

\begin{abstract}
This work is a continuation of~\cite{CHL17}, in which the same authors studied the fine structure of the extreme level sets of branching Brownian motion, namely the sets of particles whose height is within a finite distance from the global maximum. It is well known that such particles congregate at large times in clusters of order-one genealogical diameter around local maxima which form a Cox process in the limit. Our main finding here is that most of the particles in an extreme level set come from only a small fraction of the clusters, which are atypically large.
\end{abstract}

\section{Introduction and Results}
\subsection{Setup and state of the art}
\label{ss:Introduction}
This work is a continuation of~\cite{CHL17}, in which the fine structure of the extreme values of branching Brownian motion (BBM) was studied. Let us first recall the definition of BBM and some of the state-of-the-art concerning its extreme value statistics. Let $L_t$ be the set of particles alive at time $t \geq 0$ in a continuous time Galton-Watson process with binary branching at rate $1$. The entire genealogy can be recorded via the metric space $(T,\rmd)$, consisting of the elements $T := \cup_{t \geq 0} L_t$ and equipped with the {\em genealogical distance},
\begin{equation}
\nonumber
\rmd (x,x') := 
\inf \Big\{\tfrac{(t-s) + (t'-s)}{2} :\: s \geq 0, \ x,x' \text{ share a common ancestor in } L_s \Big\}
\quad ; x \in L_t,\, x' \in L_{t'} \,,
\end{equation}
for any $t, t' \geq 0$. 

Conditional on $(T,\rmd)$, let $h = (h(x) :\: x \in T)$ be a mean-zero Gaussian process with covariance function given by $\bbE h(x) h(x') = (t + t')/2 - d(x,x')$ for $x \in L_t$ and $x' \in L_{t'}$ and $t,t' \geq 0$. Equivalently, $\bbE h(x) h(x')$ is equal to the largest $s \geq 0$ such that both $x$ and $x'$ share a common ancestor at time $s$. Then the triplet $(h, T,\rmd)$ (or just $h$ for short) forms a standard BBM and $h(x)$ for $x \in L_t$ is interpreted as the height of particle $x$ at time $t$. The restriction of $T$ to all particles born up-to time $t$, will be denoted by $T_t := \cup_{s \leq t} L_s$, with $\rmd_t$ and $h_t$ the corresponding restrictions of $\rmd$ and $h$, respectively. The natural filtration of the process $(\cF_t :\: t \geq 0)$ can then be defined via $\cF_t = \sigma(h_t, T_t, \rmd_t)$ for all $t \geq 0$.

The study of extreme values of $h$ dates back to works of Ikeda et al.~\cite{Watanabe1,Watanabe2,Watanabe3}, McKean~\cite{McKean}, Bramson \cite{B_C, Bramson1978maximal} and Lalley and Sellke~\cite{LS} who derived asymptotics for the law of the
maximal height $h^*_t = \max_{x \in L_t} h_t(x)$. Introducing the centering function
\begin{equation}
\label{eq_m_t}
m_t := \sqrt{2}t - \frac{3}{2\sqrt{2}} \log^+ t \,,
\quad  \text{where } \qquad
\log^+ t := \log (t \vee 1) \,,
\end{equation}
and writing
$\wh{h}_t$ for the centered process $h_t - m_t$ and $\wh{h}^*_t : = h_t^\ast - m_t$ for its maximum, these works show that
$\wh{h}^*_t$ converges in law to $G + (1/\sqrt{2})\log Z$ as $t \to \infty$,
where $G$ is a Gumbel distributed random variable and $Z$, which is independent of $G$, is the almost sure limit as $t \to \infty$ of (a multiple of) the so-called {\em derivative martingale}:
\begin{equation}
\label{e:303}
\textstyle
Z_t := C_\diamond \sum_{x \in L_t} \big( \sqrt{2} t - h_t(x) \big) \rme^{\sqrt{2} (h_t(x) - \sqrt{2}t)} \,,
\end{equation}
for some properly chosen $C_\diamond > 0$.

Other extreme values of $h$ can be studied simultaneously by considering the {\em extremal process} associated with it. To describe the latter, given $t \geq 0$, $x \in L_t$ and $r > 0$, we let $\cC_{t,r}(x)$ denote the {\em cluster} of {\em relative heights} of particles in $L_t$, which are at genealogical distance at most $r$ from $x$. This is defined formally as the point measure,
\begin{equation}
\label{e:5B}
\textstyle
\cC_{t,r}(x) := \sum_{y \in \rmB_{r}(x)} \delta_{h_t(y) - h_t(x)},\  \text{ where} \quad
\rmB_{r}(x) := \{y \in L_t :\: \rmd (x,y) < r\} \,.
\end{equation}

Fixing any positive function $t \mapsto r_t$ such that both $r_t$ and $t-r_t$ tend to $\infty$ as $t \to \infty$ and letting $L_t^* = \big\{x \in L_t :\: h_t(x) \geq h_t(y) \,, \forall y \in \rmB_{r_t} (x)\big\}$, the {\em structured extremal process} is then given as
\begin{equation}
\label{e:N6}
\textstyle
\wh{\cE}_t := \sum_{x \in L_t^*} \delta_{h_t(x) - m_t} \otimes \delta_{\cC_{t,r_t}(x)} \,.
\end{equation}
That is, $\wh{\cE_t}$ is a point process on $\bbR \times \bbM$, where $\bbM$ denote the space of all point measures on $(-\infty, 0]$, which records {\em the-centered-height-of} ($u$) and {\em the-cluster-around} ($\cC$) -- all $r_t$-local maxima of $h$. We shall sometimes refer to the pair $(u, \cC)$ as a {\em cluster-pair}. It was then shown in~\cite{ABBS, ABK_E} that
\begin{equation}
\label{e:N7}
\big(\wh{\cE}_t, Z_t \big) \overset{t \to \infty}{\Longrightarrow} \big(\wh{\cE}, Z\big)\,,
\quad \text{where}\qquad 
\wh{\cE}|Z \sim {\rm PPP}(Z\rme^{-\sqrt{2}u} \rmd u \otimes \nu) \,.
\end{equation}
Above $Z_t$ and $Z$ are as before and $\nu$ is a deterministic distribution on $\bbM$, which we will call the {\em cluster distribution}. As two consequences, one gets the convergence of the {\em standard} extremal process of $h$:
\begin{equation}
\label{e:O.2}
\cE_t \overset{t \to \infty}\Longrightarrow \cE 
\quad  \text{where} \quad
\cE_t := \sum_{x \in L_t} \delta_{h_t(x) - m_t} 
\ , \quad 
\cE
:= \sum_{(u, \cC) \in \wh{\cE}} \, \cC(\cdot - u) \,,
\end{equation}
as well as the convergence of the {\em extremal process of local maxima}:
\begin{equation}
\label{e:5.5}
\cE^*_t \overset{t \to \infty}\Longrightarrow \cE^* 
\quad \text{where} \quad
\cE_t^* := \sum_{x \in L^*_t} \delta_{h_t(x) - m_t} 
\ , \quad 
\cE^* := \sum_{(u, \cC) \in \wh{\cE}} \, \delta_u 
\sim {\rm PPP}(Z\rme^{-\sqrt{2}u}) \,.
\end{equation}
Henceforth, we shall use the unified notation $\cE_{(t)}$ (also $\cE^*_{(t)}$, $\wh{\cE}_{(t)}$) to mean $\cE$ or $\cE_t$.

The asymptotic growth of the number of extreme values which are also $r_t$-local maxima can then be read off directly from~\eqref{e:5.5}. Indeed, a simple application of the weak law of large numbers combined with the convergence statement in~\eqref{e:5.5} yield,
\begin{equation}
\label{e:1.8}
\frac{\cE^*([-u, \infty))}{Z \rme^{\sqrt{2} u}/\sqrt{2}} 
	\overset{\bbP} \longrightarrow  1
\text{ as } u \to \infty 
\quad, \qquad 
\frac{\cE^*_t([-u, \infty))}{Z \rme^{\sqrt{2} u}/\sqrt{2}} \overset{\bbP}\longrightarrow 1
\text{ as } t \to \infty \text{ then } u \to \infty  \,.
\end{equation}
The asymptotic growth of the number of {\em all} extreme values, which is arguably the more interesting quantity, is not however a straightforward consequence of~\eqref{e:O.2}. This is because the limiting process $\cE$ is now a superposition of i.i.d. clusters $\cC$, and the law $\nu$ of the latter will determine the number of points inside any given set in the overall process. 

To address this question, a study of the cluster law $\nu$ was carried out in~\cite{CHL17} and then used to show (Proposition~1.5) that 
\begin{equation}
\label{e:29}
\bbE \cC([-v, 0]) \sim C_\star \rme^{\sqrt{2} v} 
\ \text{as } v \to \infty 
\quad ; \quad \cC \sim \nu \,,
\end{equation}
for some $C_\star > 0$. This was then combined with~\eqref{e:N7} and~\eqref{e:O.2} to derive
(Theorem~1.1 in~\cite{CHL17})
\begin{equation}
\label{e:1.10}
\frac{\cE([-v, \infty))}{C_\star Z v \rme^{\sqrt{2} v}} \overset{\bbP}{\longrightarrow} 1
\text{ as } v \to \infty 
\quad, \qquad
\frac{\cE_t([-v, \infty))}{C_\star Z v \rme^{\sqrt{2} v}}
\overset{\bbP}{\longrightarrow} 1
\text{ as } t \to \infty \text{ then } v \to \infty \,,
\end{equation}
The above shows that points coming from the clusters around extreme local maxima account for an additional multiplicative linear prefactor in the overall growth rate of extreme values. 

Next, it is natural to ask what are the {\em typical height} $u \in \bbR$ and {\em typical cluster configuration} $\cC$ of those cluster-pairs $(u, \cC) \in \wh{\cE}_{(t)}$ which {\em carry} the level set $\cE_{(t)}|_{[-v, \infty)}$. The question of the typical height was addressed in~\cite{CHL17}. To give a precise statement, for a Borel set $B \subseteq \bbR \times \bbM$ let us define (compare with~\eqref{e:O.2}):
\begin{equation}
\label{e:421}
\cE_t (\cdot \;; B) := \sum\limits_{(u,\cC) \in \wh{\cE}_t} \cC ( \cdot - u) 1_{\{(u, \cC) \in B\}} \quad \text{and} \quad
\cE (\cdot \;; B) := \sum\limits_{(u,\cC) \in \wh{\cE}} \cC ( \cdot - u) 1_{\{(u, \cC) \in B\}} \\
\,.
\end{equation}
These processes record all extreme values coming from cluster pairs $(u, \cC)$ in $B$.  Theorem~1.2 from~\cite{CHL17} then showed
that for any $\alpha \in (0,1]$, as $v \to \infty$
\begin{equation}
\label{e:37}
\frac{\cE\big([-v,\infty) ;\; [-\alpha v, \infty) \times \bbM \big)}{
\cE \big([-v,\infty)\big)} \underset{v \to \infty}{\overset{\bbP}\longrightarrow} \alpha 
\quad , \qquad
\frac{\cE_t\big([-v,\infty) ;\; [-\alpha v, \infty) \times \bbM \big)}{
	\cE_t \big([-v,\infty)\big)} \underset{\substack{t \to \infty\\ v \to \infty}}{\overset{\bbP}\longrightarrow} \alpha \,.
\end{equation}
In other words, the typical height $u$ of those extreme local maxima of which clusters contribute to $\cE_{(t)}|_{[-v, \infty)}$ is uniformly chosen in $\big[-v, O(1)\big)$.
This left the open question of the typical cluster configurations carrying $\cE_{(t)}|_{[-v, \infty)}$, and this is precisely the focus of this manuscript. 

\subsection{New results}
Our first observation is that $\cC([-v,0])$ is not concentrated around its mean. This could be readily concluded from the results in~\cite{CHL17}, by comparing the first and second moment of this quantity (see also Lemma~\ref{l:12} below). The next proposition provides a quantitative version for this assertion.
\begin{prop}
\label{p:2.3}
Let $\cC \sim \nu$. For all $\epsilon > 0$, there exists $\delta > 0$, such that for all $v$ large enough,
\begin{equation}
\label{e:33.2}
\bbE \Big( \cC([-v,0]) ;\; \cC([-v,0]) \leq \delta v \rme^{\sqrt{2} v} \Big)
\leq \epsilon \rme^{\sqrt{2} v} \,.
\end{equation}
Moreover, there exists $C > 0$ such that for all $\delta > 0$ and $v > 0$,
\begin{equation}
\label{e:33.1}
\bbP \Big( \cC([-v,0]) >  \delta v \rme^{\sqrt{2} v} \Big) 
	\leq \frac{C}{\delta} \,v^{-1} \,.
\end{equation}
\end{prop}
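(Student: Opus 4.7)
The plan has two parts. Bound~\eqref{e:33.1} is the easy half: by~\eqref{e:29} and monotonicity of $v \mapsto \bbE\cC([-v,0])$, we have $\bbE\cC([-v,0]) \leq C_1\rme^{\sqrt{2} v}$ uniformly in $v>0$, and Markov's inequality then yields $\bbP(\cC([-v,0]) > \delta v\rme^{\sqrt{2} v}) \leq C_1/(\delta v)$. Choosing $C := \max(C_1,1)$ handles the degenerate range $\delta v \leq 1$ in which the stated bound already exceeds one.

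For the truncated first-moment bound~\eqref{e:33.2}, the key additional ingredient is a second-moment estimate of the form $\bbE\cC([-v,0])^2 \leq D v\rme^{2\sqrt{2} v}$ for some $D>0$ and all $v$ large enough. This is essentially the content of Lemma~\ref{l:12} referenced in the paragraph above, and can be obtained by a many-to-two-type computation under $\nu$ parallel to the many-to-one calculation behind~\eqref{e:29}; the extra factor of $v$ reflects the integration over the time of the most recent common ancestor of two cluster particles and quantifies the non-concentration of $\cC([-v,0])$ about its mean.

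Granted this second moment, one quickly sees that the elementary combination of Cauchy--Schwarz with~\eqref{e:33.1} falls short of the target: it gives at best $\bbE(\cC([-v,0]);\cC([-v,0])\leq\delta v\rme^{\sqrt{2} v}) \leq \sqrt{\delta v\cdot C_1}\,\rme^{\sqrt{2} v}$, off by a factor of $\sqrt{v}$. The extra room must come from exploiting the fine structure of $\cC \sim \nu$ through the spine/backbone decomposition of~\cite{CHL17}: the plan is to identify an atypical event $G_v$, governed by a single ``critical'' side branch along the spine, with $\bbP(G_v) \asymp 1/v$ and $\cC([-v,0])\mathbf{1}_{G_v}$ distributed like $v\rme^{\sqrt{2} v} W$ for some integrable positive random variable $W$. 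Off $G_v$ the cluster is of strictly smaller order than $\rme^{\sqrt{2} v}$ in a quantitative sense, so its contribution to the truncated expectation is negligible; on $G_v$, truncation at $\delta v\rme^{\sqrt{2} v}$ amounts to restricting to $\{W \leq \delta\}$, and the corresponding contribution is at most $\rme^{\sqrt{2} v}\,\bbE(W; W\leq\delta)$, which tends to $0$ as $\delta \downarrow 0$ by integrability of $W$, delivering~\eqref{e:33.2}.

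The main obstacle I anticipate is making this structural decomposition rigorous --- specifically, showing that on $G_v^c$ the cluster is truly of smaller order and that on $G_v$ the rescaled cluster size $\cC([-v,0])/(v\rme^{\sqrt{2} v})$ has an integrable limiting distribution. This is where the detailed spine representation of $\nu$ from~\cite{CHL17}, together with a refined analysis of BBMs conditioned not to exceed the spine, will be used most heavily.
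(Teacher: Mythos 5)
Your bound \eqref{e:33.1} via Markov is exactly what the paper does; that part is fine (and the observation about the range $\delta v \leq 1$ is a reasonable precaution, though the paper silently absorbs it since the bound is vacuous there).

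For \eqref{e:33.2} your strategic outline --- reduce via the spine decomposition, identify an atypical event $G_v$ of probability $\asymp v^{-1}$, and show the cluster is of order $o(\rme^{\sqrt{2}v})$ in mean on $G_v^\rmc$ --- matches the paper's. But the mechanism you propose on $G_v$ is both misformulated and unnecessarily strong. If $\cC([-v,0])\mathbf{1}_{G_v}\approx v\rme^{\sqrt{2}v}W$ with $W$ a $v$-independent integrable random variable, then the truncated contribution is $v\rme^{\sqrt{2}v}\,\bbE(W;W\leq\delta)$, not $\rme^{\sqrt{2}v}\,\bbE(W;W\leq\delta)$: you are off by a factor of $v$, and integrability of $W$ alone does not rescue you. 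The way the bookkeeping actually closes is much cheaper: on the truncation event one has the pointwise bound $\cC([-v,0])\leq\delta v\rme^{\sqrt{2}v}$, so
\begin{equation*}
\bbE\bigl(\cC([-v,0]);\,G_v\cap\{\cC([-v,0])\leq\delta v\rme^{\sqrt{2}v}\}\bigr)\;\leq\;\delta v\rme^{\sqrt{2}v}\,\bbP(G_v)\;\lesssim\;\delta\,\rme^{\sqrt{2}v}\,,
\end{equation*}
and one tunes $\delta$ accordingly. No limiting distribution $W$, no uniform integrability, and in particular no characterization of how the cluster size is distributed on the atypical event is needed --- only the probability estimate $\bbP(G_v)\lesssim v^{-1}$.

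The more substantive gap is that neither of the two ingredients that carry the real weight is established. Concretely the paper proves (Lemma~\ref{l:7.6}) that the conditional first moment of $\cC^*_{t,r_t}([-v,0])$ restricted to the event that the spine stays uniformly below $-M$ (relative to the centering curve) over the critical window $[\eta v^2,\eta^{-1}v^2]$ is $o(\rme^{\sqrt{2}v})$ as $M\to\infty$, $\eta\to 0$; this uses the integrated estimate on $j_{t,v}^{\geq M}(s)$ from Lemma~\ref{l:7.4} together with the asymptotics from Lemma~\ref{lem:15}. And it proves (Lemma~\ref{l:7.7}) the $O(v^{-1})$ probability bound for the spine to come within $M$ of the curve in that window, via an entropic-repulsion estimate for the decorated random walk conditioned to stay negative (Lemma~\ref{l:5.4}). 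You explicitly flag this as the ``main obstacle,'' which is honest, but it means the proposal is a plan rather than a proof: the hard analytic work --- the first-moment estimate off $G_v$ and the entropic-repulsion probability bound --- is exactly where the proof lives, and it is missing.
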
 
The above proposition shows that the asymptotic mean on the right hand side of~\eqref{e:29} is the result of an unlikely event of probability $O(v^{-1})$, in which the number of cluster points above $-v$ is of unusually high order $v \rme^{\sqrt{2} v}$. Let us call a cluster satisfying the event in~\eqref{e:33.1} a {\em $\delta$-fat cluster for the height $-v$}, or a {\em $(-v, \delta)$-fat} cluster, for short. 

Since $\cE_{(t)}$ is a superposition of many i.i.d. clusters $\cC$, the above proposition should translate, by virtue of the law of large numbers, to the assertion that most extreme values come from only a small fraction of the clusters -- the fat ones. To phrase this assertion in formal terms, if $v \geq 0$ and $\delta > 0$, we set
\begin{equation}
\label{e:127}
F_{\delta}(v) := \Big\{ (u, \cC) \in \bbR \times \bbM :\:
	u \geq v \,,\,\, \cC \text{ is $(v-u,\delta)$-fat} \Big \}\,.
\end{equation}

The cluster $\cC$ in the pair $(u,\cC) \in F_{\delta}(v)$ is fat precisely for the height that is relevant for its contribution to $\cE|_{[v, \infty)}$. Given a Borel set $B \subseteq \bbR \times \bbM$ we also define in analog to~\eqref{e:421},
\begin{equation}
\label{e:421.5}
\cE^*_t (\cdot \;; B) := \sum_{(u,\cC) \in \wh{\cE}_t} \delta_u 1_{\{(u,\cC) \in B\}} \,.
\quad \text{and} \quad
\cE^* (\cdot \;; B) := \sum_{(u,\cC) \in \wh{\cE}} \delta_u 1_{\{(u,\cC) \in B\}}
\end{equation}
Then,
\begin{thm}
\label{t:15}
For all $\epsilon > 0$ there exist $\delta >0$, such that for all $\alpha \in (0,1)$ with probability tending to $1$ as $v \to \infty$, 
\begin{equation}
\label{e:159}
	\frac{\cE \big([-v, \infty) ;\;  [-\alpha v, \infty) \times \bbM ,\,  F_\delta(-v) \big)}
	{\cE\big([-v, \infty) ;\; [-\alpha v, \infty) \times \bbM \big)} \geq 1-\epsilon \,.
\end{equation}
Moreover, there exists $C > 0$ such that for all $\delta > 0$ and $\alpha \in (0,1)$, with probability tending to $1$ as $v \to \infty$,
\begin{equation}
\label{e:158}
	\frac{\cE^*\big([-\alpha v, \infty) ;\; F_\delta(-v)\big)}
		{\cE^*\big([-\alpha v, \infty)\big)} < \frac{C}{\delta(1-\alpha)} \,v^{-1} \,.
\end{equation}
Both statements hold in the limit when $t \to \infty$ followed by $v \to \infty$ if we replace $\cE$ and $\cE^*$ by $\cE_t$ and $\cE^*_t$ respectively.
\end{thm}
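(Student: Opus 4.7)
My plan is to work primarily with the limiting cluster-pair process $\wh\cE$, exploiting that conditional on $Z$ it is a Poisson point process on $\bbR \times \bbM$ of intensity $Z \rme^{-\sqrt 2 u} \rmd u \otimes \nu$, so that the numerators and denominators appearing in~\eqref{e:159} and~\eqref{e:158} are shot-noise functionals of a marked PPP whose moments are computable via Campbell's formula. For the second statement~\eqref{e:158}, set $D := \cE^*([-\alpha v, \infty))$ and $N := \cE^*([-\alpha v, \infty); F_\delta(-v))$. Conditional on $Z$, $D$ is Poisson of mean $N_0 := Z \rme^{\sqrt 2 \alpha v}/\sqrt 2$, while given $D$ the numerator $N$ is $\mathrm{Binomial}(D,p)$ with $p$ the probability that an independent $\cC \sim \nu$ is $(-v-u, \delta)$-fat for $u$ drawn from the density $\sqrt 2 \rme^{-\sqrt 2 (u+\alpha v)}$ on $[-\alpha v, \infty)$. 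Plugging~\eqref{e:33.1} into the expression for $p$ and substituting $w = v + u$ yields $p \leq C'/(\delta(1-\alpha) v)$; since $N_0 \to \infty$ exponentially, Poisson/Binomial concentration then gives $D \sim N_0$ and $N \leq 2 N_0 p$ with probability $1 - o(1)$, which is the claimed bound.

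The first statement~\eqref{e:159} is the harder one. Let $M := \cE([-v, \infty); [-\alpha v, \infty) \times \bbM)$ and let $M_{\rm thin}$ be the same quantity restricted to cluster-pairs \emph{not} in $F_\delta(-v)$; from~\eqref{e:1.10} combined with~\eqref{e:37} one has $M \geq (1-\eta) \alpha C_\star Z v \rme^{\sqrt 2 v}$ with probability $1 - o(1)$, so it suffices to upper-bound $M_{\rm thin}$ by a small multiple of the same quantity. The main obstacle is that $\bbE[M_{\rm thin} \mid Z]$ is \emph{infinite}: the integrand $\rme^{-\sqrt 2 u} \bbE[\cC([-v-u, 0])]$ does not decay in $u$, reflecting the heavy-tailed nature of $\cC([-w, 0])$ that produces the extra factor of $v$ in~\eqref{e:1.10}. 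To bypass this I truncate at $R_v := c \log v$ with $c \in (0, 1/\sqrt 2)$ and split $M_{\rm thin} = M_{\rm thin}^{[-\alpha v, R_v]} + M_{\rm thin}^{(R_v, \infty)}$. For the main piece, Campbell's formula and~\eqref{e:33.2} give $\bbE[M_{\rm thin}^{[-\alpha v, R_v]} \mid Z] \leq 2 \eps_0 \alpha Z v \rme^{\sqrt 2 v}$ with $\eps_0 > 0$ made arbitrarily small by the choice of $\delta$; using additionally the pointwise bound $\cC([-v-u, 0]) \leq \delta (v + u) \rme^{\sqrt 2 (v+u)}$ valid on the thin event to absorb one factor in the second moment yields $\Var\bigl(M_{\rm thin}^{[-\alpha v, R_v]} \mid Z\bigr) = O\bigl(Z \delta \eps_0 \, v^{1 + \sqrt 2 c} \rme^{2 \sqrt 2 v}\bigr)$. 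Since $1 + \sqrt 2 c < 2$, Chebyshev promotes the mean bound into the desired high-probability one. For the tail piece, $\cE^*((R_v, \infty))$ is Poisson of mean $Z/(\sqrt 2 \, v^{\sqrt 2 c}) \to 0$, so no local maximum lies above $R_v$ with probability $1 - o(1)$, on which event $M_{\rm thin}^{(R_v, \infty)} \leq M^{(R_v, \infty)} = 0$.

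The heart of the argument is thus overcoming the infinite first moment of $M_{\rm thin}$: both estimates of Proposition~\ref{p:2.3} enter in complementary ways — \eqref{e:33.2} in the first-moment bound (providing the small factor $\eps_0$) and the pointwise bound from the thin event in the second-moment bound (controlling the Chebyshev variance despite the heavy tail) — together with a logarithmic truncation $R_v = c \log v$ at any $c \in (0, 1/\sqrt 2)$, which is precisely the scale that simultaneously makes the Chebyshev tail for the bulk and the Poisson tail for the complement $o(1)$. The corresponding statements for $\cE_t$ and $\cE^*_t$ (as $t \to \infty$ followed by $v \to \infty$) then follow from the joint convergence~\eqref{e:N7}: for fixed $v$, restricting $\wh\cE_{(t)}$ to $[-\alpha v, R_v] \times \bbM$ makes all numerators and denominators into continuous bounded functionals of the restriction of $\wh\cE_{(t)}$ to a compact subset of $\bbR \times \bbM$, so Slutsky's theorem transfers the ratio bounds from the limit $\wh\cE$ to the prelimit $\wh\cE_t$.
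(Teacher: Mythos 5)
Your proposal is correct and follows essentially the same route as the paper's: exploit the conditional Poisson structure of $\wh\cE$ via Campbell's formula, for~\eqref{e:158} bound the Poisson parameter of the numerator using~\eqref{e:33.1} and concentrate, and for~\eqref{e:159} truncate $u$ at a slowly growing level (the paper uses $z=\sqrt{\log v}$, you use $c\log v$ with $c<1/\sqrt2$, both of which make $\rme^{\sqrt2 z}/v\to 0$), then control the bulk by Chebyshev against the lower bound on the denominator coming from~\eqref{e:1.10} and~\eqref{e:37}, and dispose of the $u>z$ tail by noting $\wh\cE$ has no points there w.h.p. The only non-cosmetic variation is in the variance step for the thin bulk: the paper invokes the second-moment bound of Lemma~\ref{l:12} directly, whereas you replace one factor of $\cC([-v-u,0])$ by the pointwise cutoff $\delta(v+u)\rme^{\sqrt2(v+u)}$ available on the thin event and then use~\eqref{e:33.2} for the remaining factor, which avoids Lemma~\ref{l:12} and gives a slightly sharper variance estimate carrying the extra $\delta\epsilon_0$ factor; both routes close by Chebyshev since $\sqrt2 c<1$ (resp.\ $\rme^{\sqrt2\sqrt{\log v}}=v^{o(1)}$).
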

As a corollary we get,
\begin{cor} \label{c:1.3}
Let $\epsilon > 0$ be arbitrarily small. Then for all $v$ large enough, we may find $G_\epsilon(-v) \subseteq \bbR \times \bbM$, such that with probability at least $1-\epsilon$, 
\begin{equation}
\label{e:158a}
	\frac{\cE \big([-v, \infty) ;\;  G_\epsilon(-v) \big)}
	{\cE\big([-v, \infty)\big)}  \geq 1-\epsilon \,.
\qquad \text{and} \qquad\ 
	\frac{\cE^*\big([-v, \infty) ;\; G_\epsilon(-v)\big)}
		{\cE^*\big([-v, \infty)\big)} < \epsilon 
\end{equation}
The same holds for all $t$ large enough (depending on $v$ and $\epsilon$), if we replace $\cE$ and $\cE^*$ by $\cE_t$ and $\cE^*_t$ respectively.
\end{cor}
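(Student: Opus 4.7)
The candidate set is
\[
G_\epsilon(-v) := \bigl([-\alpha v, \infty) \times \bbM\bigr) \cap F_\delta(-v),
\]
where I extract $\delta = \delta(\epsilon)$ from the first part of Theorem~\ref{t:15} applied with error $\epsilon/3$, and then pick $\alpha = \alpha(\epsilon) \in (0,1)$ close enough to $1$ that, via~\eqref{e:37}, $\cE([-v, \infty) ;\; [-\alpha v, \infty) \times \bbM) \geq (1-\epsilon/3)\,\cE([-v, \infty))$ with probability tending to $1$ as $v \to \infty$. Crucially, $\delta$ is chosen first (depending only on $\epsilon$) and then $\alpha$, so that the constant $C/(\delta(1-\alpha))$ appearing in~\eqref{e:158} is a finite quantity depending only on $\epsilon$.

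On the intersection of the two high-probability events just mentioned together with~\eqref{e:159}, one has
\[
\cE\bigl([-v, \infty) ;\; G_\epsilon(-v)\bigr)
\geq (1-\epsilon/3)\,\cE\bigl([-v, \infty) ;\; [-\alpha v, \infty) \times \bbM\bigr)
\geq (1-\epsilon/3)^2 \,\cE([-v, \infty))
\geq (1-\epsilon)\,\cE([-v, \infty)),
\]
which establishes the first inequality of~\eqref{e:158a}. For the second, since $G_\epsilon(-v) \subseteq \bigl([-\alpha v, \infty) \times \bbM\bigr) \cap F_\delta(-v)$ and $\cE^*([-\alpha v, \infty)) \leq \cE^*([-v, \infty))$, inequality~\eqref{e:158} directly yields, with probability tending to $1$ as $v \to \infty$,
\[
\frac{\cE^*\bigl([-v, \infty) ;\; G_\epsilon(-v)\bigr)}{\cE^*([-v, \infty))}
\leq \frac{C}{\delta(1-\alpha)}\, v^{-1},
\]
which is smaller than $\epsilon$ once $v$ is large. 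A union bound over the three high-probability events, combined with choosing $v$ large enough, concludes the proof.

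I do not anticipate any serious obstacle here, as the corollary is essentially a bookkeeping consequence of Theorem~\ref{t:15} combined with~\eqref{e:37}. The only subtlety is the order of quantifiers: the $\delta$ produced by~\eqref{e:159} must be chosen before $\alpha$, to keep $C/(\delta(1-\alpha))$ finite as $\alpha \uparrow 1$, so that the $v^{-1}$ decay in~\eqref{e:158} eventually dominates. The corresponding statement for $\cE_t, \cE_t^*$ follows verbatim by using the iterated-limit versions of~\eqref{e:37} and Theorem~\ref{t:15} in place of the pure $v \to \infty$ statements.
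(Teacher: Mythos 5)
Your proof is correct and follows essentially the same route as the paper's: define $G_\epsilon(-v) = ([-\alpha v,\infty)\times\bbM)\cap F_\delta(-v)$, extract $\delta$ from the first part of Theorem~\ref{t:15}, pick $\alpha$ close to $1$, invoke \eqref{e:37} and both parts of Theorem~\ref{t:15}, and close with a union bound. The remark about fixing $\delta$ before $\alpha$ is a fair observation, though the uniformity over $\alpha$ is already built into the statement of Theorem~\ref{t:15}, so no extra care is actually required there.
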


\subsection{Proof idea and heuristic picture}
The proof of Theorem~\ref{t:15} is a rather standard application of Chebyshev's inequality, using Proposition~\ref{p:2.3} (along with a second moment bound on $\cC([-v,0])$ from~\cite{CHL17}) and the explicit (conditional) Poissonian structure of $\wh{\cE}$. We therefore omit further explanations of this proof and the straightforward one for Corollary~\ref{c:1.3}, focusing instead on the argument for Proposition~\ref{p:2.3}. As in~\cite{CHL17}, the key ingredient is a handle on the cluster distribution $\nu$. This was carried out in Section~3 of~\cite{CHL17} and is presented in this manuscript again in Section~\ref{s:Reduction}. Let us therefore first briefly describe this derivation and recall how it was used in~\cite{CHL17} to show~\eqref{e:29}. The reader is referred to~\cite{CHL17} for more details.

Thanks to~\eqref{e:N7} and the product structure of the intensity measure in the definition of $\wh{\cE}$, the law $\nu$ can be obtained as the limiting distribution of the cluster around 
a uniformly chosen particle $X_t$ in $L_t$, conditioned to be the global maximum at time $t$ and having height, say, $m_t$. Tracing the trajectory of this distinguished particle backwards in time and accounting, via the spinal decomposition (Many-to-one Lemma, see Subsection~\ref{ss:Spine}), for the random genealogical structure, one sees a particle performing 
a standard Brownian motion $W=(W_s)_{s \geq 0}$ from $m_t$ at time $0$ to $0$ at time $t$. This, so-called, {\em spine particle} gives birth at random Poissonian times (at an accelerated rate $2$, see Subsection~\ref{ss:Spine}) to independent standard branching Brownian motions, which then evolve back to time $0$ and are conditioned to have their particles stay below $m_t$ at this time. The cluster distribution at genealogical distance $r$ around $X_t$ is therefore determined by the relative heights of particles of those branching Brownian motions which branched off before time $r$ (see Figure~\ref{f:Spinal}).

\begin{figure}
\centering
\begin{minipage}[b]{.45\textwidth}
\centering
    \includegraphics[width=\textwidth]{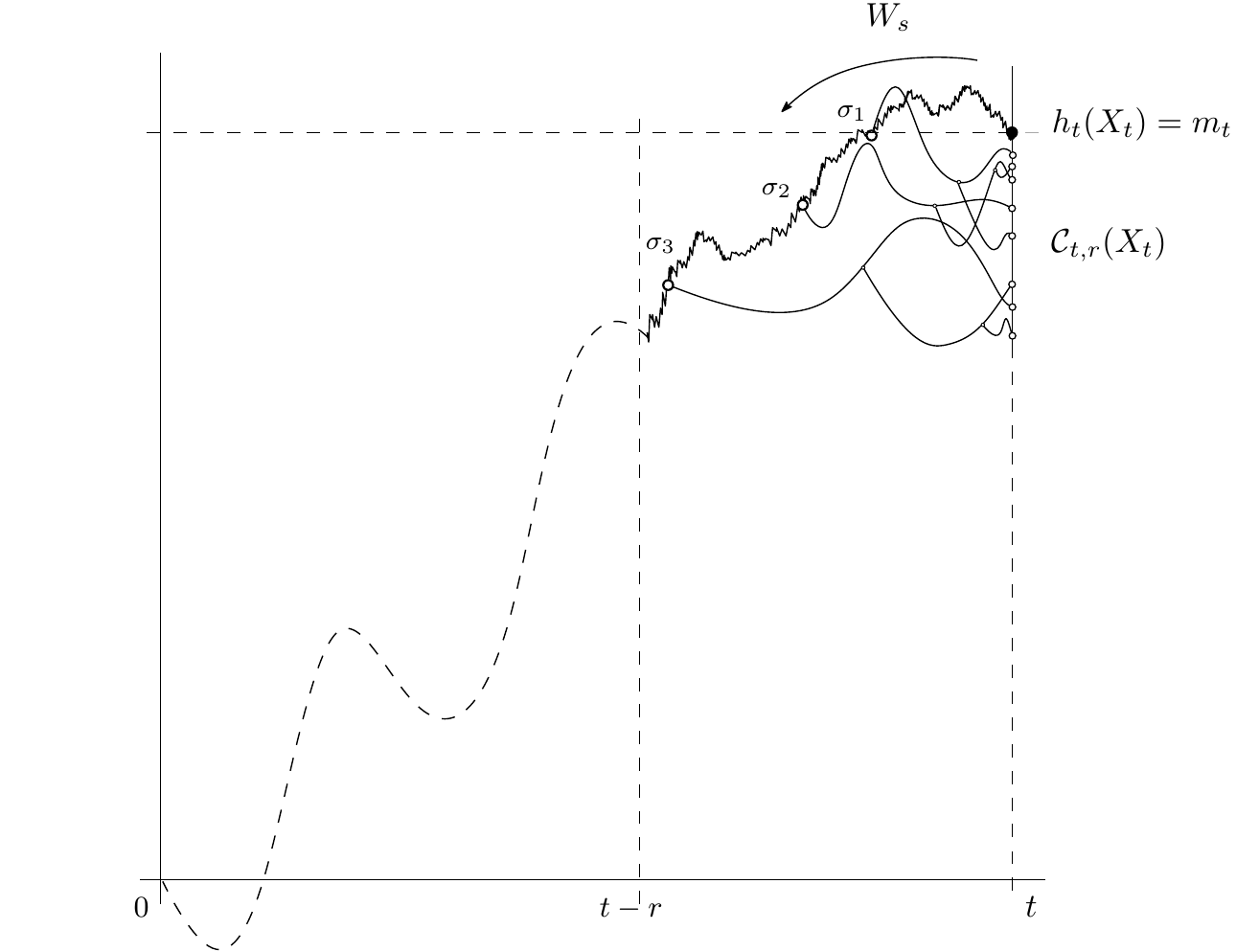} 
    \caption{\small The cluster $\cC_{t,r}(X_t)$ around the spine $X_t$, conditioned to be the maximum and at height $m_t$. The process $W_s$ is a Brownian bridge from $(0,m_t)$ to $(t,0)$ and $\sigma_1, \sigma_2, \dots$ are the branching times.
    } \label{f:Spinal}
\end{minipage}%
\hfill
\begin{minipage}[b]{.45\textwidth}
\centering
    \includegraphics[width=\textwidth]{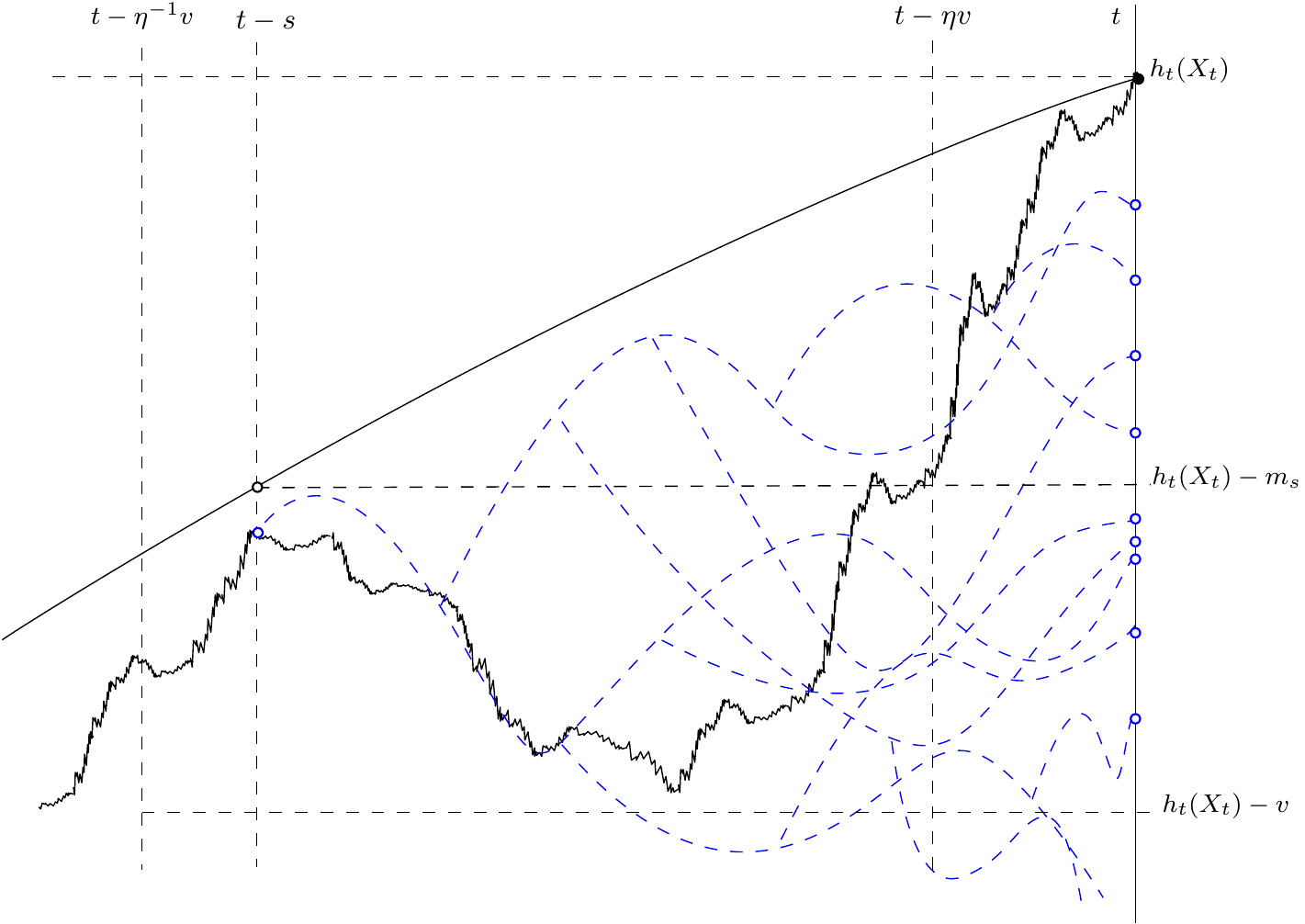} 
    \caption{\small The contribution to the cluster level set $\cC_{t,r_t}(X_t)|_{(-v,0]}$ on   
    an atypical event when the cluster maximum $X_t$ ascends from 
$h_t(X_t)-m_s+O(1)$ at time $t-s$ to $h_t(X_t)$ at time $t$, where $s \in [\eta v^2, \eta^{-1} v^2]$.}
    \label{f:fatcluster}
\end{minipage}

\end{figure}


Formally, denoting by $0 \leq \sigma_1 < \sigma_2 < \dots$ the points of a Poisson point process $\cN$ on $\bbR_+$ with rate $2$ and letting $H = (h^{s}_t(x) :\: t \geq 0\,,\,\, x \in L^s_t)_{s \geq 0}$ be a collection of independent branching Brownian motions (with $W, \cN$ and $H$ independent), the cluster distribution $\nu$ can then be written as the weak limit (Lemma~\ref{l:7.0}):
\begin{equation}
\label{e:11}
\begin{split}
\nu(\cdot) & = \lim_{t \to \infty}
\bbP \big(\cC_{t,r_t} (X_t) \in \cdot \big|\, h^*_t = h_t(X_t) = m_t \big) \\
& =\lim_{t \to \infty} \bbP \! \Big( \! \sum\limits_{\sigma_k \leq r_t} \sum\limits_{x \in L^{\sigma_k}_{\sigma_k}} \! \delta_{h^{\sigma_k}_{\sigma_k}(x)+ W_{\sigma_k} - m_t}  \! \in \! \cdot
	 \ \Big| \max\limits_{k: \sigma_k \in [0,t]}  \big(W_{\sigma_k} \! +  h^{\sigma_k *}_{\sigma_k}\big) \! \leq  \! m_t ,\, W_0 = m_t,\, W_t = 0 \Big) \,,
\end{split}
\end{equation}
where $r_t$ is as in~\eqref{e:N6} and $h^*_t$ is as above~\eqref{eq_m_t}. Tilting by $s \mapsto -m_t(1-s/t)$ and setting $\wh{W}_{t,s} := W_s - \gamma_{t,s}$, with $\gamma_{t,s} := m_t (s/t) - m_s = 3/(2\sqrt{2})(\log^+ s - \tfrac{s}{t}\log^+ t)$, we obtain,
\begin{equation}
\label{e:12}
\nu(\cdot) = 
\lim_{t \to \infty} \bbP
\Big( \! \sum\limits_{\sigma_k \leq r_t} \, \cE_{\sigma_k}^{\sigma_k} \big(\cdot - \wh{W}_{t,\sigma_k}\big)
	\!  \in \! \cdot 
	 \ \Big|  \max\limits_{k : \sigma_k \in [0,t]} \big(\wh{W}_{t,{\sigma_k}}  \! +   \wh{h}^{\sigma_k*}_{\sigma_k}\big) \! \leq \!  0 ,\, \wh{W}_{t,0} = \wh{W}_{t,t} = 0 \Big)\,,
\end{equation}
where $\cE^{s}_t$ is the extremal process associated with $h^{s}_t$ and $\wh{h}^s_{t} = h^s_t - m_t$. In this paper we refer to the triplet $(\wh{W}, \cN, H)$ as a decorated random-walk-like (DRW) process (see Subsection~\ref{ss:2.2}).

Now let $\cC \sim \nu$ and pick $v > 0$. Provided that we can exchange limit and integration, it follows form~\eqref{e:12} and Palm calculus that $\bbE \cC([-v, 0])$ is equal to the limit as $t \to \infty$ of
\begin{equation}
\label{e:M1.34.1}
\frac{\displaystyle \int_{0}^{r_t} \! 2\rmd s \!\!\int \! \bbE \Big(
\cE^s_s \big([-v, 0] \!-\! z\big) ;\; z + \wh{h}^{s*}_s \leq 0 \Big)
\bbP \Big(
\max\limits_{\sigma_k \leq t} \big(\wh{W}_{t,{\sigma_k}} \!+\! \wh{h}^{\sigma_k*}_{\sigma_k}\big) \leq 0, \wh{W}_{t,s} \in \rmd z \Big| \wh{W}_{t,0}\! =\! \wh{W}_{t,t} \!=\! 0 \Big) }
{\bbP \Big(\max\limits_{k : \, \sigma_k \in [0,t]} \big(\wh{W}_{t,{\sigma_k}} + \wh{h}^{\sigma_k*}_{\sigma_k}\big) \leq 0 \Big| \wh{W}_{t,0} = \wh{W}_{t,t} = 0 \Big)} \,,
\end{equation}
where the inner integral is over $z = O(1)$ and is the result of the total probability formula, after conditioning on $\{\wh{W}_{t,s} = z\}$.

The left most term in the integrand is the first moment of the size of the (global) extreme level set of $h^s_s$, subject to a truncation event restricting the height of its global maximum. This was estimated in~\cite{CHL17} (Lemma 4.2 with $|z| = O(1)$ and $v \to \infty$) as
\begin{equation}
\label{e:M1.41}
\bbE \big(\cE_t\big([-v, 0]-z\big) ;\; \wh{h}^*_t \leq -z \big) \approx C v \rme^{\sqrt{2} v - v^2/(2t)} \,,
\end{equation}
The remaining terms are of the form
\begin{equation}
\label{e:1.23}
\bbP \big( \max_{k: \sigma_k \in [s_1,s_2]} \big(\wh{W}_{t,\sigma_k} + \wh{h}^{\sigma_k*}_{\sigma_k}\big) \leq 0 \,\Big|\, \wh{W}_{t,s_1} = x ,\, \wh{W}_{t,s_2} = y \Big) \,,
\end{equation}
namely the probability that a DRW process stays negative on $[s_1, s_2]$.

For standard Brownian motion, the well known reflection principle gives
\begin{equation}
\bbP \Big(\max_{s \in [s_1,s_2]} W_s \leq 0 \,\Big|\, W_{s_1}=x,\, W_{s_2} =y \Big) \sim \frac{2xy}{s_2-s_1} 
\quad \text{as }\  s_1 - s_1 \to \infty\,,
\end{equation}
It was shown in~\cite{CHL17} (Subsection 2.1; see also Lemma~\ref{lem:15} here), that similar estimates hold for~\eqref{e:1.23} as well. (This is because the drift function $\gamma_{t,s}$ is bounded by $1 + \log^+ (s \wedge (t-s))$, the random decorations $(h^{s*}_{s} :\: s \geq 0)$ are (at least) exponentially tight and the random sampling times $(\sigma_k :\: k \geq 1)$ arrive at a Poissonian rate.) We can therefore estimate the probability in the denominator in~\eqref{e:M1.34.1}
 by $C t^{-1}$ and the probability in the numerator by 
\begin{equation}
C z^2 (s(t-s))^{-1} \bbP(\wh{W}_{t,s} \in \rmd z | \wh{W}_{t,0} = \wh{W}_{t,t} = 0) \approx C' t^{-1} s^{-3/2} z^2 \,.
\end{equation}
Using also~\eqref{e:M1.41} and performing the integrating over $z = O(1)$ in~\eqref{e:M1.34.1}, we obtain,
\begin{equation}
\label{e:M1.42}
\bbE \cC([-v, 0]) \approx C v \rme^{\sqrt{2}v}
 \int_{s=0}^{\infty} s^{-3/2} \rme^{-v^2/(2s)} \rmd s 
= C \rme^{\sqrt2{v}} \int_{r=0}^{\infty} r^{-3/2} \rme^{-1/(2r)} \rmd r 
= C_\star \rme^{\sqrt2{v}} \,.
\end{equation}
This explains~\eqref{e:29} and is precisely how these asymptotics are derived in~\cite{CHL17} (Proposition 1.5).

\medskip
Now, looking more carefully at~\eqref{e:M1.42}, we see that for large $v$, ``most'' of the contribution to the first integral comes from $s \in [\eta v^2 \,,\, \eta^{-1} v^2]$. Rolling back the derivation all the way to~\eqref{e:M1.34.1}, the latter is equivalent to saying that most of the contribution to $\bbE \cC([-v, 0])$ comes from trajectories in which $\textstyle \max_{s \in [\eta v^2, \eta^{-1} v^2]} \big|\wh{W}_{t,s}\big| = O(1)$. Such trajectories have very small probability:
\begin{equation}
\nonumber
\bbP \Big(\textstyle \max\limits_{s/v^2 \in [\eta, \frac{1}{\eta}]} \big|\wh{W}_{t,s}\big| = O(1)
\,\,\Big|\,\max\limits_{\sigma_k \leq t} \big(\wh{W}_{t,{\sigma_k}} + \wh{h}^{\sigma_k*}_{\sigma_k}\big) \leq 0, \wh{W}_{t,0} = \wh{W}_{t,t} = 0 \Big)
\approx \displaystyle \int_{s=\eta v^2}^{v^2/\eta} C s^{-3/2} \rmd s = \frac{C'}{v}
\end{equation}
But when they occur, it follows from~\eqref{e:M1.41} with $s \in [\eta v^2, \eta^{-1} v^2]$ (and an additional concentration argument) that 
$\cC([-v, 0]) \approx C \cE^{s}_{s}\big([-v ,0] - z\big)1_{\{\wh{h}^{s*}_s \leq -z\}}
 \approx C' v \rme^{\sqrt{2}v} $.

Reversing further the two reduction steps~\eqref{e:11} and~\eqref{e:12}, we can rephrase the last statement as follows (see Lemma~\ref{l:7.6} and Lemma~\ref{l:7.7}): Having $C_\star \rme^{\sqrt{2}v}$ mean number of particles above $-v$ (for large $v$) in a cluster is the result of an atypical $O(v^{-1})$ probability event, in which the number of such particles is of order $v \rme^{\sqrt{2}v}$. Such a cluster is realized at a large time $t$ when its local maximum $X_t$ has ascended (atypically slowly) to $h_t(X_t)$ from $h_t(X_t)-m_s+O(1)$ at time $t-s$, where $s \in [\eta v^2, \eta^{-1} v^2]$ (see Figure~\ref{f:fatcluster}). 

\subsection*{Remainder of the paper}
Section~\ref{s:Reduction} makes rigorous the reduction of the study of $\nu$ to that of a DRW process conditioned to stay negative, as just described. It also provides the necessary probability estimates for the latter as well as needed moment bounds for $\cC([-v,0]$. Section~\ref{s:3} then uses these preliminaries to prove the main results in the manuscript. Constants are denoted $C, C'$, etc. They are positive, finite and may change from line to line.
 
\section{A handle on the cluster distribution}
\label{s:Reduction}
As mentioned in the introduction, understanding the cluster distribution $\nu$ is key to proving the statements in the manuscript. As in~\cite{CHL17}, a handle on this distribution is obtained by first identifying $\nu$ with the law of the cluster around a distinguished spine particle, conditioned to be the global maximum of the process. Then, by tracing the trajectory of the spine particle backwards in time, the events involved can be recast in terms of a decorated random-walk-like process conditioned to stay negative. The two reduction steps are summarized in the next two subsections. Estimates for such random walks are given in the succeeding subsection. Finally, some upper bounds from~\cite{CHL17}, derived using the statements in the first three subsections, are stated in the last subsection. For all proofs see~\cite{CHL17}. 
\subsection{Reduction to the cluster of the spine, conditioned to be the maximum}
\label{ss:Spine}
We begin by recalling the useful technique of spinal decomposition (c.f.,~\cite{HarRob2011}). The (one) spine branching Brownian motion (SBBM) is defined as the original process $(h,T,\rmd)$, only that at any given time one of the particles is designated as the {\em spine particle}. Particles which are not the spine branch and diffuse exactly as before. The spine particle also diffuses as before, but branches (by two) at rate $2$ not $1$. When the spine branches, one of its children, chosen uniformly at random, is designated the new spine. We shall use the same notation $(T,h, \rmd)$ for the SBBM process and distinguish this process from the original one by renaming the underlying probability measure to $\wt{\bbP}$ (with $\wt{\bbE}$ the corresponding expectation). The identity of the spine at time $t \geq 0$ will be recorded via the random variable $X_t \in L_t$. The genealogical line of decent of the spine particle, namely the function $t \mapsto X_t$, will be referred to as the spine of the process. 

The following is known as the Many-To-One Lemma. To avoid integrability issues, we state it for bounded functions. Recall that $\cF_t$ is the sigma-algebra generated by $h_t$, $T_t$ and $\rmd_t$ (but not $X_t$).
\begin{lem}[Many-To-One Lemma]
\label{l:4.1}
Let $F = (F(x) :\: x \in L_t)$ be a bounded $\cF_t$-measurable real-valued random function on $L_t$. Then, 
\begin{equation}
\bbE \Big( \sum_{x \in L_t} F(x) \Big)
= \rme^{t} \,\wt{\bbE} F(X_t) \,.
\end{equation}
\end{lem}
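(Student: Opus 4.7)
The plan is to prove the identity via a change-of-measure argument. I would introduce an auxiliary \emph{enhanced} probability measure $\bbP'$ on the space of BBMs with a marked spine, under which the underlying tree $(T_t,h_t,\rmd_t)$ is a \emph{standard} BBM (every particle branches at rate $1$) and the spine is constructed sequentially by choosing, at each branching event on the current spine, one of the two children uniformly at random to become the new spine. Two elementary observations about $\bbP'$ are key: (i) the $\cF_t$-marginal of $\bbP'$ coincides with $\bbP$; and (ii) conditionally on $\cF_t$ one has $\bbP'(X_t = x \,|\, \cF_t) = 2^{-N_x}$ for each $x \in L_t$, where $N_x$ denotes the number of branching events along the ancestral line from the root to $x$.

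The next step is to compute the Radon-Nikodym derivative of $\wt\bbP$ with respect to $\bbP'$ on the enriched filtration $\cG_t := \cF_t \vee \sigma(X_s :\: s \leq t)$. Under both measures the spatial motion of every particle, the branching dynamics of every non-spine particle, and the uniform rule for picking the new spine at each spine-branching event are identical; the only difference is the branching rate of the spine itself, namely $2$ under $\wt\bbP$ versus $1$ under $\bbP'$. A standard change-of-intensity formula for Poisson processes then yields
\begin{equation}
\label{e:rnd-plan}
\frac{\rmd \wt\bbP}{\rmd \bbP'}\bigg|_{\cG_t} = 2^{N_t} \rme^{-t},
\end{equation}
where $N_t$ is the total number of branchings along the spine up to time $t$, and observe that $N_t = N_{X_t}$ by definition of the spine endpoint.

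Combining \eqref{e:rnd-plan} with (i)-(ii), for any bounded $\cF_t$-measurable $F$,
\begin{equation}
\wt\bbE F(X_t)
= \rme^{-t}\,\bbE'\bigl[ 2^{N_{X_t}} F(X_t) \bigr]
= \rme^{-t}\, \bbE' \Big[ \sum_{x \in L_t} 2^{-N_x} \cdot 2^{N_x} F(x) \Big]
= \rme^{-t}\, \bbE \Big[ \sum_{x \in L_t} F(x) \Big],
\end{equation}
where the middle equality conditions on $\cF_t$ and applies (ii), and the last uses (i). Multiplying by $\rme^t$ gives the lemma. The main technical point is a careful construction of $\bbP$, $\bbP'$ and $\wt\bbP$ on a common sample space supporting all three dynamics, and the rigorous justification of \eqref{e:rnd-plan}; this is standard bookkeeping in the spine-decomposition literature (cf.~\cite{HarRob2011}), after which everything reduces to the tower property. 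The boundedness of $F$ is used only to ensure integrability in the intermediate steps and can be relaxed to nonnegative $F$ via monotone convergence.
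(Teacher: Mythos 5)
Your proof is correct. The paper itself does not prove Lemma~\ref{l:4.1}; it cites \cite{HarRob2011} as a reference for the spinal decomposition, and the change-of-measure argument you give — auxiliary measure $\bbP'$ with uniform spine on a plain BBM, the conditional law $\bbP'(X_t = x \mid \cF_t) = 2^{-N_x}$, and the Poisson likelihood ratio $2^{N_t}\rme^{-t}$ for the spine's branching clock — is precisely the standard route in that literature, and all three steps together with the tower-property computation are carried out correctly.
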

Recalling~\eqref{e:5B}, let now $\cC_{t,r}^* := \cC_{t, r_t} (X_t)$ denote the cluster around the spine. Thanks to Lemma~\ref{l:4.1} and the convergence in~\eqref{e:N7}, it is then not difficult to show,
\begin{lem}[Lemma 5.1 in~\cite{CHL17}]
\label{l:7.0}
Let $\cC \sim \nu$ be distributed according to the cluster law. Then for any $\nu$-continuity set $B \subseteq \bbM$,
\begin{equation}
\label{e:387}
\bbP(\cC \in B) = \lim_{t \to \infty} 
\wt{\bbP} \big( \cC_{t, r_t}^* \in B \, \big|\, \wh{h}_t(X_t) = \wh{h}_t^* = 0\big) \,.
\end{equation}
\end{lem}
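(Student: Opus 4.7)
The plan is to apply Lemma~\ref{l:4.1} (Many-to-One) to recast the conditional probability in~\eqref{e:387} as a ratio of expectations involving the standard BBM, and then read off the limit from the convergence~\eqref{e:N7}. Since $\{\wh{h}_t(X_t) = \wh{h}_t^* = 0\}$ has zero probability, I would first interpret~\eqref{e:387} through a standard regularization: for an approximate identity $\phi_\eta$ at $0$ (bounded, continuous, with support shrinking to $\{0\}$ and $\int \phi_\eta = 1$), define
\begin{equation*}
R_t(\phi_\eta, B) \ := \ \frac{\wt{\bbE}\big[\phi_\eta(\wh{h}_t(X_t))\, 1_B(\cC_{t,r_t}^*)\, 1_{\{\wh{h}_t(X_t) = \wh{h}_t^*\}}\big]}{\wt{\bbE}\big[\phi_\eta(\wh{h}_t(X_t))\, 1_{\{\wh{h}_t(X_t) = \wh{h}_t^*\}}\big]}\,,
\end{equation*}
and read the lemma as the assertion $\lim_{\eta \to 0} \lim_{t \to \infty} R_t(\phi_\eta, B) = \nu(B)$.

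The first step of the argument would be Many-to-One. Writing $x_t^*$ for the (a.s.\ unique) maximizer of $h_t$ on $L_t$, the event $\{\wh{h}_t(X_t) = \wh{h}_t^*\}$ coincides with $\{X_t = x_t^*\}$, and the quantity $\phi_\eta(\wh{h}_t(x))\, 1_B(\cC_{t,r_t}(x))\, 1_{\{x = x_t^*\}}$ defines a bounded $\cF_t$-measurable random function of $x \in L_t$. Applying Lemma~\ref{l:4.1},
\begin{equation*}
\rme^t \cdot \wt{\bbE}\big[\phi_\eta(\wh{h}_t(X_t))\, 1_B(\cC_{t,r_t}^*)\, 1_{\{X_t = x_t^*\}}\big]
\ = \ \bbE\big[\phi_\eta(\wh{h}_t^*)\, 1_B\big(\cC_{t,r_t}(x_t^*)\big)\big]\,,
\end{equation*}
with the analogous identity for the denominator ($B = \bbM$). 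Hence $R_t(\phi_\eta, B)$ equals the corresponding ratio for the standard BBM, and the factors of $\rme^t$ cancel.

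The second step would be to take $t \to \infty$. The pair $(\wh{h}_t^*, \cC_{t,r_t}(x_t^*))$ is precisely the $u$-maximal atom of $\wh{\cE}_t$, since the global maximum is automatically the local maximum of largest height. On the a.s.\ event that the limit $\wh{\cE}$ has a unique $u$-maximal atom (which holds because the Poisson intensity is simple in $u$), extracting the maximal atom is a continuous functional in the vague topology, so~\eqref{e:N7} and the continuous mapping theorem give $(\wh{h}_t^*, \cC_{t,r_t}(x_t^*)) \Longrightarrow (u^*, \cC^*)$, where $(u^*, \cC^*)$ is that atom. Because $\wh{\cE}|Z \sim {\rm PPP}(Z\rme^{-\sqrt{2}u}\rmd u \otimes \nu)$ has product intensity, the marking property of Poisson processes gives $\cC^* \sim \nu$ independently of $u^*$ (and of $Z$). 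Therefore, for any $\nu$-continuity set $B$,
\begin{equation*}
R_t(\phi_\eta, B) \ \xrightarrow[t \to \infty]{}\ \frac{\bbE[\phi_\eta(u^*)]\, \nu(B)}{\bbE[\phi_\eta(u^*)]} \ = \ \nu(B)\,,
\end{equation*}
and since the limit is independent of $\eta$, letting $\eta \to 0$ produces~\eqref{e:387}.

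The main delicate point will be the regularization issue: one must check that the ``conditional probability'' on the right-hand side of~\eqref{e:387} is indeed well defined as $\lim_{\eta \to 0} \lim_{t \to \infty} R_t(\phi_\eta, B)$ (independently of the approximating sequence), which amounts to a mild regularity statement about the joint law of $(\wh{h}_t(X_t), \wh{h}_t^*)$ near the diagonal and to uniformity of the convergence in Step~2 over compact $\phi$-supports. Once this is in place, the remaining ingredients -- Many-to-One and the marking property of the limiting PPP -- are mechanical consequences of the already-established convergence~\eqref{e:N7}.
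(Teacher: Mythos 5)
Your proposal is correct and matches the approach the paper indicates (the paper explicitly attributes this lemma to Many-to-One plus the convergence in~\eqref{e:N7}). The Many-to-One application with $F(x) = \phi_\eta(\wh{h}_t(x))\, 1_B(\cC_{t,r_t}(x))\, 1_{\{x = x_t^*\}}$ and the subsequent use of the marking property of the limiting PPP are exactly the right ingredients, and you correctly flag the regularization/order-of-limits issue as the only place requiring care.

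One small remark for tightening: as written you prove $\lim_{\eta\to 0}\lim_{t\to\infty} R_t(\phi_\eta, B) = \nu(B)$, whereas~\eqref{e:387} is the limit $t\to\infty$ of the (regular) conditional probability at each fixed $t$, i.e.\ $\lim_{t\to\infty}\lim_{\eta\to 0} R_t(\phi_\eta, B)$. Since $\wt{\bbP}(\wh h_t(X_t)\in\cdot)$ has a smooth Gaussian-type density and the event $\{X_t = x_t^*\}$ has positive conditional probability given $\{\wh h_t(X_t)=0\}$, the inner limit in the latter order is indeed the regular conditional probability; swapping the limits then follows because your $t\to\infty$ limit of $R_t(\phi_\eta, B)$ is uniform over $\eta$ small (the $\phi_\eta$-dependence cancels exactly). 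Making this explicit, rather than leaving it as a pointer, would close the argument completely.
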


\subsection{Reduction to a decorated random-walk conditioned to stay negative}
\label{ss:2.2}
Next, we introduce the decorated random-walk-like process using-which, probabilities such as the ones on the right hand side of~\eqref{e:387}, can be handled. Let $W=(W_s :\: s \geq 0)$ be a standard Brownian motion, whose initial position we leave free to be determined according to the conditional statements we make. For $0 \leq s \leq t$, we fix
\begin{equation}
\label{e:20.5}
\gamma_{t,s} := \tfrac{3}{2\sqrt{2}} \big(\log^+ s - \tfrac{s}{t}\log^+ t \big) \quad \text{and}  \qquad
\wh{W}_{t,s} := W_s - \gamma_{t,s} \,.
\end{equation}
Let us also define the collection $H=\big(h^s :\: s \geq 0\big)$ of independent copies of $h$, that we will assume to be independent of $W$ as well. Finally, let $\cN$ be a Poisson point process with intensity $2 \rmd x$ on $\bbR_+$, independent of $H$ and $W$ and denote by $\sigma_1 < \sigma_2 < \dots$ its ordered atoms. The triplet $(\wh{W}, \cN, H)$ forms what we shall call a {\em decorated random-walk-like process} (DRW). The underlying probability measure will still be denoted by $\bbP$ and the corresponding expectation by $\bbE$.

The following reduction statements appear in Subsection~3.1 of~\cite{CHL17}. We refer the reader to that reference for their straightforward proofs. Recall that $\rmB_{r}(x)$ is the ball of radius $r$ around $x$ in the genealogical distance $\rmd$, and that we write $\wh{h}_t = h_t -m_t$ and $\wh{h}^*_t = \max_{x \in L_t} \wh{h}_t(x)$. For $A \subseteq L_t$ set also $\wh{h}_t^*(A)$ for $\max_{x \in A} \wh{h}_t(x)$. The first lemma concerns the event that particles in a genealogical neighborhood of the spine (including the spine itself) stay below a given height. 
\begin{lem}
\label{l:5.1}
For all $0 \leq r \leq t$ and $u, w \in \bbR$,
\begin{equation}
\nonumber
\wt{\bbP} \Big( \wh{h}_t^*\big(\rmB^\rmc_{r}(X_t)\big)
 \leq u \, \Big| \, \wh{h}_t(X_t) = w \Big) \\
= \bbP \Big( \max_{k: \sigma_k \in [r,t]} \big(\wh{W}_{t,\sigma_k} + \wh{h}^{\sigma_k*}_{\sigma_k}\big) \leq 0 \ \Big|\ 
	\wh{W}_{t,r} = w - u,\, \wh{W}_{t,t} = -u \Big) \,.
\end{equation}
In particular for all $t \geq 0$ and $v,w \in \bbR$,
\begin{equation}
\nonumber
\wt{\bbP} \big( \wh{h}^*_t \leq u \, \big| \, \wh{h}_t(X_t) = w \big) 
= \bbP \Big( \max_{k: \sigma_k \in [0,t]} \big(\wh{W}_{t,\sigma_k} + \wh{h}^{\sigma_k*}_{\sigma_k}\big) \leq 0 \ \Big|\ 
	\wh{W}_{t,0} = w - u,\, \wh{W}_{t,t} = -u \Big) \,.
\end{equation}
\end{lem}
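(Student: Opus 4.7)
The natural tool is the spinal decomposition of the spine BBM, which under $\wt{\bbP}$ describes the process as a distinguished particle (the spine) performing a standard Brownian motion, branching at Poissonian rate $2$, and emitting at each branching event an independent copy of standard BBM that evolves until time $t$. My plan is to reverse time along the spine, translate the left-hand side event into a statement about this decorated structure, and then absorb the various centerings into a change of variables that recovers the DRW process $(\wh{W}, \cN, H)$.

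To be concrete, I would let $Y_s$ denote the height of the ancestor of $X_t$ at forward time $t-s$, so that $Y$ is a standard Brownian motion with $Y_t = 0$ and $Y_0 = h_t(X_t)$; the branching points along the spine then form a rate-$2$ Poisson process $0 < \sigma_1 < \sigma_2 < \dots$ on $[0, t]$ in backward time, with the sibling branched at $\sigma_k$ spawning the independent BBM $h^{\sigma_k}$ of duration $\sigma_k$ starting from height $Y_{\sigma_k}$. Since a particle at time $t$ belongs to $\rmB^\rmc_{r}(X_t)$ iff it descends from a sibling branched at backward time $\sigma_k \geq r$, the event $\{\wh{h}_t^*(\rmB^\rmc_r(X_t)) \leq u\}$ translates into $\{\max_{\sigma_k \in [r,t]}(Y_{\sigma_k} + h^{\sigma_k *}_{\sigma_k}) \leq m_t + u\}$.

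I would then change variables via $\wh{W}_{t,s} := Y_s - (m_t - m_s) - u$. Expanding $h^{\sigma_k *}_{\sigma_k} = m_{\sigma_k} + \wh{h}^{\sigma_k *}_{\sigma_k}$ rewrites the event exactly as $\{\max_{\sigma_k \in [r,t]}(\wh{W}_{t,\sigma_k} + \wh{h}^{\sigma_k *}_{\sigma_k}) \leq 0\}$, matching the right-hand side. The algebraic identity $\gamma_{t,s} + m_s = (s/t)m_t$ then yields $W_s := \wh{W}_{t,s} + \gamma_{t,s} = Y_s - m_t(1-s/t) - u$, so $W$ is a Brownian motion with constant drift $m_t/t$. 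Because Brownian bridges are invariant under the addition of a deterministic linear drift, conditioning $W$ on $\{\wh{W}_{t,r} = w-u,\, \wh{W}_{t,t} = -u\}$ produces the same bridge law on $[r,t]$ as conditioning $Y$ on $\{Y_r = w + m_t - m_r,\, Y_t = 0\}$. The second identity (case $r=0$) follows because $\wh{W}_{t,0} = w-u$ corresponds to $Y_0 = w + m_t$, i.e. to $\wh{h}_t(X_t) = w$, while $Y_t = 0$ holds automatically along any spine trajectory rooted at $0$; the first identity then follows by restricting the argument to the time-interval $[r,t]$ and using the Markov property of the spine to reduce the conditioning to the value $Y_r$.

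The only delicate point is bookkeeping: checking that the shifts $m_t$, $m_s$, $u$ and the drift $\gamma_{t,s}$ align so that both the event and the endpoint conditions correspond bijectively under the change of variables. The critical arithmetic identity $\gamma_{t,s} = (s/t) m_t - m_s$ is what absorbs the centering of the sibling BBM maxima against the spine's linear drift and makes everything fit together cleanly.
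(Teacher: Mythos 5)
Your overall strategy -- spinal decomposition, passing to backward time along the spine, and absorbing the centerings $m_t$, $m_s$, $u$, and $\gamma_{t,s}$ into a change of variables -- is the right one and is precisely the mechanism underlying the paper's derivation (which is deferred to \cite{CHL17}). The key algebraic identity $\gamma_{t,s}=(s/t)m_t-m_s$, the verification that the sibling maxima recenter to $\wh{h}^{\sigma_k*}_{\sigma_k}$, and the observation that a Brownian bridge is unaffected by adding a deterministic linear drift, are all correct and are exactly what is needed. For the $r=0$ identity your bookkeeping checks out completely: $\wh{W}_{t,0}=w-u$ corresponds to $Y_0=w+m_t$, and $\wh{W}_{t,t}=-u$ corresponds to $Y_t=0$, so the two conditionings match.

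The step that does not go through as written is the passage from $r=0$ to general $r$. You claim that the first identity ``follows by restricting the argument to the time-interval $[r,t]$ and using the Markov property of the spine to reduce the conditioning to the value $Y_r$.'' This conflates two different conditionings. On the left-hand side the conditioning is on $\wh{h}_t(X_t)=w$, i.e.\ on $Y_0=w+m_t$ (and $Y_t=0$ is automatic), so the backward spine on $[0,t]$ is a Brownian bridge from $(0,w+m_t)$ to $(t,0)$; restricted to $[r,t]$ its value $Y_r$ is \emph{random}, distributed according to the bridge marginal. On the right-hand side the conditioning $\wh{W}_{t,r}=w-u$ pins down $Y_r$ to the \emph{specific} value $w+m_t-m_r$ under your identification. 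The Markov property lets you write the left-hand side as an \emph{average over} $Y_r$ of conditional probabilities given $Y_r$; it does not identify that average with the conditional probability evaluated at one particular value of $Y_r$. To close this gap you would need an additional argument showing why fixing $Y_r$ at $w+m_t-m_r$ reproduces the bridge average, or else a different reduction that tracks the genuine conditioning through $\gamma_{t,r}$ from the start. You should spell this out rather than appealing to Markov in one line.
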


In a similar way, we can express the distribution of the cluster around the spine particle, given that it reaches height $m_t$. For what follows $\cE_t^s$ denotes the extremal process of $h^s_t$, defined as $\cE_t$ in~\eqref{e:O.2} only with respect to $h^s_t$ in place of $h_t$. 
Then,
\begin{lem}
\label{l:5.2}
For all $0 \leq r \leq t$,
\begin{multline}
\label{e:26.6}
\wt{\bbP}  \Big( \big(\cC_{t,r} (X_t), (h_{t-s}(X_{t-s}) - m_t)_{s \leq r} \big) 
\! \in\! \cdot \,\Big|\, \wh{h}^*_t = \wh{h}_t(X_t) = 0 \Big)  \\
= \! \bbP \bigg( \! 
\Big( \! \sum\limits_{\sigma_k \leq r } \! \cE_{\sigma_k}^{\sigma_k} \big(\cdot - \wh{W}_{t,\sigma_k}\big),\, 
(\wh{W}_{t,s} \!- \! m_s)_{s \leq r }\! \Big)\! \in \! \cdot 
	 \Big| \max_{k: \sigma_k \in [0,t]} \big(\wh{W}_{t,\sigma_k} \!+\! \wh{h}^{\sigma_k*}_{\sigma_k}\big) \leq 0 , 
	 	\wh{W}_{t,0} = \wh{W}_{t,t} = 0 \! \bigg)
\end{multline}
\end{lem}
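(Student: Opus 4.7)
The plan is a spinal decomposition followed by reversing time along the spine, carried out jointly for the cluster and the spine trajectory. Under $\wt{\bbP}$, the SBBM construction underlying Lemma~\ref{l:4.1} provides the following conditionally independent pieces: (i) the spine's height $B_s := h_s(X_s)$ is a standard Brownian motion on $[0,t]$ started at $0$; (ii) branches leave the spine at the atoms of an independent rate-$2$ Poisson process on $[0,t]$; and (iii) each branch at forward time $\tau$ initiates an independent standard BBM of duration $t-\tau$ whose root is placed at $B_\tau$. Since $\{\wh h_t(X_t) = 0\} = \{B_t = m_t\}$ depends only on (i), conditioning on it turns $B$ into a Brownian bridge from $(0,0)$ to $(t,m_t)$ while leaving (ii) and (iii) unchanged.

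Index the branching times in reverse order by their ages $0 < \sigma_1 < \sigma_2 < \dots$ seen from time $t$. By time-reversal invariance of a homogeneous Poisson process, $(\sigma_k)_k$ is again a rate-$2$ Poisson process on $[0,t]$, independent of $B$; relabel the attached BBMs as $h^{\sigma_k}$ so that $H=(h^s)_{s \geq 0}$ is an i.i.d.\ family independent of everything else, matching the set-up of Subsection~\ref{ss:2.2}. Now define
\begin{equation}
\nonumber
W_s := h_{t-s}(X_{t-s}) - m_t + m_s + \gamma_{t,s}, \qquad 0 \leq s \leq t,
\end{equation}
so that $W_0 = W_t = 0$ and $\wh W_{t,s} - m_s = h_{t-s}(X_{t-s}) - m_t$ by construction. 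Writing the bridge $B$ as $B_s = (s/t)m_t + \beta_s$ with $\beta$ a standard Brownian bridge from $0$ to $0$ on $[0,t]$, and substituting into the definition, the deterministic pieces $-(s/t)m_t + m_s + \gamma_{t,s}$ cancel identically (a short algebraic check using the explicit form of $m_\cdot$ and $\gamma_{t,\cdot}$), leaving $W_s = \beta_{t-s}$. Since time-reversal preserves the law of a standard bridge, $W$ is, conditionally on $\{\wh h_t(X_t)=0\}$, a standard Brownian bridge on $[0,t]$, which is exactly the law of $W$ under $\bbP(\,\cdot\,|\,\wh W_{t,0} = \wh W_{t,t} = 0)$.

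It remains to rewrite the two coordinates of the observable. The spine-trajectory coordinate $(h_{t-s}(X_{t-s}) - m_t)_{s \leq r}$ equals $(\wh W_{t,s} - m_s)_{s \leq r}$ by definition of $W$. For the cluster, the descendants of a branch of age $\sigma_k \leq r$ are in bijection with the particles $y' \in L^{\sigma_k}_{\sigma_k}$ of $h^{\sigma_k}$, and a descendant $y$ satisfies
\begin{equation}
\nonumber
h_t(y) - h_t(X_t) = \big(h_{t-\sigma_k}(X_{t-\sigma_k}) - m_t\big) + h^{\sigma_k}_{\sigma_k}(y') = \big(\wh W_{t,\sigma_k} - m_{\sigma_k}\big) + h^{\sigma_k}_{\sigma_k}(y') = \wh W_{t,\sigma_k} + \wh h^{\sigma_k}_{\sigma_k}(y'),
\end{equation}
which summed over $y$ gives $\cC_{t,r}(X_t) = \sum_{\sigma_k \leq r}\cE^{\sigma_k}_{\sigma_k}(\cdot - \wh W_{t,\sigma_k})$, with the spine itself contributing the $\sigma_0 = 0$ term since $L^0_0$ consists of a single particle at height $0$. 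Applying the same identity to all $\sigma_k \in [0,t]$ rewrites the remaining conditioning event $\{\wh h^*_t \leq 0\}$ as $\{\max_{k:\sigma_k\in[0,t]}(\wh W_{t,\sigma_k} + \wh h^{\sigma_k*}_{\sigma_k}) \leq 0\}$, completing the identification \eqref{e:26.6}.

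The only delicate step is the algebraic cancellation of $-(s/t)m_t + m_s + \gamma_{t,s}$; everything else is direct bookkeeping on the spinal decomposition. This cancellation is precisely what pins down the form of the correction $\gamma_{t,s}$ used throughout Section~\ref{s:Reduction} and what makes the right-hand side of \eqref{e:26.6} a clean ``DRW conditioned to stay below $0$'' event.
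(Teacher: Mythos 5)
Your proof is correct and takes the same route as the paper: the paper itself defers the proof of Lemma~\ref{l:5.2} to Subsection~3.1 of~\cite{CHL17}, and your argument (spinal decomposition under $\wt\bbP$, conditioning on $h_t(X_t)=m_t$, time reversal of the Poisson branching times and of the Brownian bridge, and the algebraic cancellation $-\tfrac{s}{t}m_t + m_s + \gamma_{t,s}=0$ which is exactly the ``tilting'' behind the definition of $\wh W_{t,\cdot}$) is precisely the derivation the paper sketches informally around~\eqref{e:11}--\eqref{e:12}. One small caveat, inherited from the lemma statement itself rather than introduced by you: the spine's own atom $\delta_0$ in $\cC_{t,r}(X_t)$ is not literally produced by $\sum_{\sigma_k\le r}\cE^{\sigma_k}_{\sigma_k}(\cdot-\wh W_{t,\sigma_k})$ since $\sigma_1>0$ a.s.; your informal ``$\sigma_0=0$'' fix reflects the implicit convention of~\cite{CHL17}.
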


\subsection{Estimates for a decorated random-walk conditioned to stay negative}
\label{ss:2.3}
Having reduced the analysis of the cluster distribution to the study of the DRW conditioned to stay positive, we need estimates for probabilities involving the latter. Such probabilities are treated in general in \cite{CHL17Supp}, which is the supplement material to~\cite{CHL17}. Specialized to our setup here, they take the form of the two lemmas below. The first lemma was already included in~\cite{CHL17} (see third part of Lemma~3.4).
\begin{lem}[Lemma 3.4 in~\cite{CHL17}]
\label{lem:15}
There exists non-increasing functions $g: \bbR \to (0,\infty)$ and $f: \bbR \to (0,\infty)$ such that
\begin{equation}
\label{e:53}
\bbP \Big( \max_{k: \sigma_k \in [0,t]} \big(\wh{W}_{t,\sigma_k} + \wh{h}^{\sigma_k*}_{\sigma_k}\big) \leq 0 
	\,\Big|\, \wh{W}_{t,0} = v \,,\,\, \wh{W}_{t,t} = w \Big) 
	\sim 2 \frac{f(v) g(w)}{t},
\end{equation}
as $t \to \infty$ uniformly in $v,w$ satisfying $v,w < 1/\epsilon$ and $(v^-+1)(w^-+1) \leq t^{1-\epsilon}$ for any fixed $\epsilon > 0$. 
\end{lem}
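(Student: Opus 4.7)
The plan is to follow the classical three-scale strategy for random-walk bridge ballot problems, adapted to accommodate both the logarithmic drift $\gamma_{t,s}$ and the random decorations $\wh{h}^{\sigma_k*}_{\sigma_k}$. Fix a large truncation parameter $s_0$ and split the interval $[0,t]$ into the \emph{left boundary} $[0,s_0]$, the \emph{bulk} $[s_0, t-s_0]$ and the \emph{right boundary} $[t-s_0,t]$. Conditioning on $\wh{W}_{t,s_0}=a$ and $\wh{W}_{t,t-s_0}=b$, the Markov property of $(\wh{W},\cN,H)$ factorises the probability in \eqref{e:53} into a product of three conditional probabilities, which can then be handled separately.

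On the \emph{bulk} interval, the drift $\gamma_{t,s}$ is bounded by a constant depending on $s_0$ and $\epsilon$, and the DRW $\wh{W}$ is close (in the Skorokhod sense) to a Brownian bridge from $a$ at time $s_0$ to $b$ at time $t-s_0$. Using the exponential tightness of $\wh{h}^{s*}_{s}$ (from Bramson's tightness for the BBM maximum) and the fact that the Poisson sampling times fill the interval densely, one shows that the ``decorated" stay-negative event is, up to a multiplicative error that vanishes as $s_0\to\infty$, equivalent to the bridge itself staying below $0$. The reflection principle then gives
\[
\bbP\Big(\max_{s\in[s_0,t-s_0]} B_s \leq 0 \,\Big|\, B_{s_0}=a,\, B_{t-s_0}=b\Big) = 1-\exp\!\Big(-\tfrac{2 a b}{t-2s_0}\Big) \sim \tfrac{2ab}{t},
\]
valid as $t\to\infty$ with $a,b<0$ and $|a||b|\ll t$. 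The boundary conditioning $(v,w)$ only affects the joint law of $(a,b)$ via the bridge transition density, which factorises into independent endpoint densities up to a $1+o_{s_0}(1)$ factor.

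On the \emph{left boundary} $[0,s_0]$, the problem becomes the finite-time DRW starting from $v$ and ending at $a$, conditioned to have no atom $\sigma_k$ where $\wh{W}_{t,\sigma_k}+\wh{h}^{\sigma_k*}_{\sigma_k}$ crosses $0$. Integrating against the conditional law of $a^-$ and matching with the bulk asymptotic suggests defining
\[
f(v) \ := \ \lim_{s_0\to\infty} \int_{-\infty}^0 (-a)\, \bbP\Big(\max_{\sigma_k\leq s_0}(\wh{W}_{t,\sigma_k}+\wh{h}^{\sigma_k*}_{\sigma_k})\leq 0,\ \wh{W}_{t,s_0}\in \rmd a \,\Big|\, \wh{W}_{t,0}=v\Big)\bigm/ \rmd a,
\]
with $g(w)$ defined analogously on the right end (using time reversal and the symmetry $\gamma_{t,s}=\gamma_{t,t-s}+\text{affine}$). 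The monotonicity of $f$ and $g$ is inherited from the obvious monotonicity in the starting point of the event $\{\max(\wh{W}+\wh{h}^*)\leq 0\}$. Positivity and finiteness of the limits follow from exponential tightness of $\wh{h}^{s*}_s$ together with the standard Tanaka/Kozlov-type bound for a centered random walk conditioned to stay negative, which yields the $1/t$ decay rate and the absolute continuity of the limiting endpoint law.

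The main obstacle is proving that the boundary limit defining $f$ (and $g$) actually exists, i.e.\ that the quantity on the right is Cauchy in $s_0$. This is where the supplement \cite{CHL17Supp} does the heavy lifting: one needs a quantitative coupling of the DRW conditioned to stay negative with an associated random walk, plus Seneta-Heyde-type moment estimates to ensure that additional increments over $[s_0, s_0']$ contribute only a multiplicative $1+o(1)$ correction. A secondary technical nuisance is that $\gamma_{t,s}$ is asymmetric in $t$ (behaving like $\tfrac{3}{2\sqrt 2}\log s$ near $0$ but like $-\tfrac{3}{2\sqrt 2}\log(t-s)\cdot(1+o(1))$ near $t$ after the affine piece $-\tfrac{s}{t}\log t$ is subtracted), so $f$ and $g$ are genuinely different functions even though the construction is symmetric; uniformity in $v,w$ up to $(v^-+1)(w^-+1)\leq t^{1-\epsilon}$ is then obtained by standard Gaussian truncation, isolating the event $\{|\wh{W}_{t,s_0}|,|\wh{W}_{t,t-s_0}|\leq t^{1/2-\epsilon/2}\}$ which carries essentially all the probability.
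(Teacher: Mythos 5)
The paper does not prove Lemma~\ref{lem:15}: it imports it verbatim from~\cite{CHL17} (Lemma~3.4 there), where the argument consists of verifying the abstract hypotheses~(1.1)--(1.3) of a general asymptotic-equivalence theorem for decorated random walks from the supplement~\cite{CHL17Supp}, and then invoking that theorem as a black box. Your proposal instead sketches a direct proof via the classical three-scale ballot-problem decomposition (left boundary / Brownian bulk / right boundary), which is almost certainly how the general result of~\cite{CHL17Supp} is proved internally, so the underlying ideas are aligned even though the surface strategies differ. The bulk factor $\sim 2ab/t$ via the reflection principle and the identification of $f, g$ as weighted boundary functionals are the right structural ingredients.

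That said, your sketch has some imprecisions and leaves the crux untouched. First, the quantity you use to define $f(v)$ still depends on $t$ through $\gamma_{t,\cdot}$; an inner $t\to\infty$ limit is needed before letting $s_0\to\infty$, and you suppress it. Second, you integrate $a$ only over $(-\infty,0)$, but the stay-negative event $\max_{\sigma_k}(\wh W_{t,\sigma_k}+\wh h^{\sigma_k*}_{\sigma_k})\le 0$ does \emph{not} force $\wh W_{t,s_0}\le 0$: the decorations $\wh h^{s*}_s$ can be negative for small $s$, and $\sigma_k$ is random, so the positive-$a$ contribution must be shown negligible separately. Third, the factorization of the joint law of the two boundary endpoints holds up to a $1+o_t(1)$ error (bridge decoupling as $t\to\infty$ with $s_0$ fixed), not $1+o_{s_0}(1)$ as you wrote. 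Finally, the genuinely hard part of the lemma --- that the defining sequences for $f$ and $g$ are Cauchy in $s_0$, with the convergence uniform over the stated range $(v^-+1)(w^-+1)\le t^{1-\epsilon}$ --- you openly defer to~\cite{CHL17Supp}, which is exactly the part the paper also delegates there; so your proposal is an informative heuristic reconstruction rather than a self-contained proof, and does not genuinely replace the machinery of~\cite{CHL17Supp}.
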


Next we have the following sharp entropic-repulsion result. This was not needed in~\cite{CHL17} and hence left out from that work. Nevertheless, it is an immediate consequence of Proposition 1.5 from the supplement material~\cite{CHL17Supp} of~\cite{CHL17}.
\begin{lem}
\label{l:5.4}
For all $M \geq 1$,
\begin{equation}
\label{e:652}
\sup_{s \geq 1} \limsup_{t \to \infty}\, \sqrt{s} \,\bbP \Big( \max_{u \in [s, t-s]} \wh{W}_{t,u}  \geq -M
	\,\Big|\,  \max_{\sigma_k \in [0,t]} \big(\wh{W}_{t,\sigma_k} \! + \! \wh{h}^{\sigma_k*}_{\sigma_k}\big) \leq 0  \,,\, \wh{W}_{t,0} \!= \! \wh{W}_{t,t} \! = \! 0 \Big) 
	\!< \! \infty \,.
\end{equation}
\end{lem}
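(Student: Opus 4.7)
The strategy is to exploit the entropic repulsion of the DRW under the conditioning in~\eqref{e:652}: heuristically, $\wh{W}_{t, \cdot}$ should behave like the negative of a Bessel-3-type bridge of length $t$ from $0$ to $0$, so that at time $u \in [s, t-s]$ it sits at height $\asymp -\sqrt{u \wedge (t-u)}$; reaching within a bounded distance $M$ of zero anywhere in this interval is therefore atypical, of probability $\asymp M/\sqrt{s}$ by a Bessel-3 first-passage calculation.

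By the symmetry of $\gamma_{t, \cdot}$ around $t/2$ and of the DRW under time reversal, I would first reduce to controlling the one-sided event $\bbP\big(\max_{u \in [s, t/2]} \wh{W}_{t, u} \geq -M \,\big|\, \cdots\big)$. Setting $\tau := \inf\{u \geq s : \wh{W}_{t, u} \geq -M\}$, writing $F$ for the event in~\eqref{e:652} and $F_{[a, b]}$ for its restriction to $[a, b]$, the strong Markov property of the DRW triplet $(\wh{W}, \cN, H)$ at $\tau$ gives
\begin{equation}
\nonumber
\bbP\big(\tau \leq t/2,\, F \,\big|\, \wh{W}_{t, 0} = \wh{W}_{t, t} = 0\big) \asymp \int_s^{t/2} \! \bbP\big(\tau \in \rmd u,\, F_{[0, u]} \,\big|\, \wh{W}_{t, 0} = 0\big) \cdot \bbP\big(F_{[u, t]} \,\big|\, \wh{W}_{t, u} = -M,\, \wh{W}_{t, t} = 0\big),
\end{equation}
up to an $O(1)$ density-correction factor from the bridge pinning at the right endpoint. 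The second factor on the right is $\sim 2 f(-M) g(0)/(t - u) \asymp 1/t$ by Lemma~\ref{lem:15}. Dividing by the denominator $\bbP(F \,|\, \wh{W}_{t, 0} = \wh{W}_{t, t} = 0) \sim 2 f(0) g(0)/t$ from the same lemma then reduces the problem to bounding the ``free'' first-passage probability $\bbP(\tau \leq t/2,\, F_{[0, s]} \,|\, \wh{W}_{t, 0} = 0)$.

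To bound this I would split further at time $s$, writing it as $\int \bbP(\wh{W}_{t, s} \in \rmd y,\, F_{[0, s]} \,|\, \wh{W}_{t, 0} = 0) \cdot \bbP(\tau' \leq t/2 - s \,|\, \wh{W}_{t, s} = y)$, with $\tau'$ the first passage above $-M$ of the DRW started at $y$. The first factor is controlled by Proposition~1.5 of~\cite{CHL17Supp}, the sharp density estimate underlying Lemma~\ref{lem:15}: it yields a Bessel-3-bridge-type density $\propto y^2 e^{-y^2/(2s)} s^{-3/2}\, \rmd y$ for $y \leq 0$ with $|y| \lesssim \sqrt{s}$, consistent with the expected linear growth of the renewal functions $f, g$ at $-\infty$. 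The second factor admits a Bessel-3 first-passage estimate $\asymp M/|y|$ for $|y| \gg M$. Multiplying and integrating $y$ over $(-\infty, 0]$ produces a bound of order $M/\sqrt{s}$, which is the desired statement.

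The principal obstacle is the Bessel-3 first-passage estimate $\bbP(\tau' \leq t/2 - s \,|\, \wh{W}_{t, s} = y) \lesssim M/|y|$ for the DRW---the DRW analog of the classical identity $\bbP(\mathrm{Bes}\text{-}3 \text{ hits } M \,|\, R_0 = r) = M/r$. The Brownian component is handled by the reflection principle, but the cumulative effect of the Poisson decorations $(\wh{h}^{\sigma_k*}_{\sigma_k})$ must be controlled carefully. The paper's remark that Lemma~\ref{l:5.4} is an immediate consequence of Proposition~1.5 of~\cite{CHL17Supp} indicates that the supplement's sharp density estimate is already stated with sufficient uniformity in the starting position for this first-passage bound to follow directly, after which the rest is a routine integration.
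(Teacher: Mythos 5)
Your proposal mischaracterizes Proposition~1.5 of~\cite{CHL17Supp} and, as a result, ends up circular. As the paper states explicitly, that proposition \emph{is} the entropic-repulsion bound~\eqref{e:652} itself (for general boundary values $x,y$), not a ``sharp density estimate'' for $\wh{W}_{t,s}$ under the stay-negative conditioning, and certainly not a first-passage identity. The paper's proof is therefore very short and structurally different from yours: it notes that~\eqref{e:652} is exactly Proposition~1.5 of~\cite{CHL17Supp} specialized to $x=y=0$, and then verifies that the proposition's hypotheses are met for the DRW $(\wh{W},\cN,H)$. Most of those hypotheses were already checked when proving Lemma~\ref{lem:15}; the only new work is the drift bound~\eqref{e:A3s} controlling $\gamma_{t,u}-\tfrac{u}{r}\gamma_{t,r}$ and $\gamma_{t,u'}-\tfrac{t-u'}{t-r}\gamma_{t,r}$, which the paper establishes by an explicit computation using Lemma~3.3 of~\cite{CHL17}. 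Your proposal never engages with this hypothesis verification, which is the entire content of the paper's argument.

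What you wrote instead is a from-scratch derivation of entropic repulsion via a strong-Markov decomposition at the first-passage time $\tau$ --- a legitimate route and in fact the kind of argument one would expect \emph{inside} the proof of Proposition~1.5 in~\cite{CHL17Supp}. But as written it has a real gap, which you yourself flag: the DRW first-passage estimate $\bbP(\tau' \leq t/2-s \mid \wh{W}_{t,s}=y)\lesssim M/|y|$ is never proved, and the conditional density $\bbP(\wh{W}_{t,s}\in\rmd y,\,F_{[0,s]}\mid\wh{W}_{t,0}=0)\propto y^{2}e^{-y^{2}/(2s)}s^{-3/2}\rmd y$ is also asserted without support --- neither follows from Lemma~\ref{lem:15}, which only gives the ballot-type asymptotic $\sim 2f(v)g(w)/t$ for fixed endpoints. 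You then appeal to Proposition~1.5 of~\cite{CHL17Supp} to supply these inputs, but since that proposition is the lemma you are trying to prove, the appeal is circular. Given that Proposition~1.5 is available as a citation, the efficient move --- and the one the paper takes --- is to cite it and check its assumptions, and the substantive step you are missing is precisely that check of the drift hypothesis.
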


\begin{proof}
Recalling the definition of $\wh{W}_{t,u}$ the desired statement is precisely that in Proposition~1.5 in~\cite{CHL17Supp} with $x=y=0$. We just need to verify that the assumptions in that proposition hold. Assumptions~(1.1),~(1.2) and~(1.3) from~\cite{CHL17Supp} (with $\lambda = 2$ and sufficiently small $\delta >0$) were already verified in the proof of Lemma~\ref{lem:15} in~\cite{CHL17}. It remains therefore to check that
\begin{equation}
\begin{split}
\label{e:A3s}
\gamma_{t,u} - \frac{u}{r} \gamma_{t,r} & \leq \delta^{-1} \big(1+(\wedge^r(u))^{1/2-\delta}\big)
\\
\gamma_{t,u'} - \frac{t-u'}{t-r} \gamma_{t,r} & \leq \delta^{-1} \big(1+(\wedge^{t-r}(u'-r))^{1/2-\delta}\big)\,,
\end{split}
\end{equation}
for all $0 < u < r < u' < t$ and some $\delta > 0$. To this end, a simple computation gives
\begin{equation}
\begin{split}
\gamma_{t,u}  - \gamma_{t,r}(u/r) & = \tfrac{3}{2\sqrt{2}} \Big(\log^+r - \tfrac{r}{u} \log^+ u\Big) \\
\gamma_{t,u'} - \tfrac{t-u'}{t-r}\gamma_{t,r}  & = \tfrac{3}{2\sqrt{2}} \Big( \log^+ u - \Big(\tfrac{t-u'}{t-r} \log^+ r + \tfrac{u'-r}{t-r} \log^+ t \Big)\Big) \,.
\end{split}
\end{equation}
Then the upper bound of Lemma~3.3 in~\cite{CHL17}, once with $(u,r,0)$ and once with $(t, u'-r, r)$ in place of $(t,s,r)$, gives~\eqref{e:A3s} with any $\delta \leq 2\sqrt{\log 2}/3$.
\end{proof}

\subsection{Moment bounds for the number of cluster points}
Next we state several moment bounds for the number of cluster points. These were derived in~\cite{CHL17} using the statements in the last three subsections and some non-negligible work. We start with a second moment bound for $\cC([-v, 0])$. It is part of Proposition~1.5 in~\cite{CHL17}.
\begin{lem}
\label{l:12}
Let $\cC \sim \nu$. Then there exists $C > 0$ such that for all $v \geq 0$,
\begin{equation}
\label{e:28}
\bbE \big[ \cC([-v, 0])^2 \big] \leq C (v+1) \rme^{2\sqrt{2} v} \,.
\end{equation} 
\end{lem}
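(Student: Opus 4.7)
The plan is to apply the spinal reduction (Lemma~\ref{l:5.2}) and carry out a second-moment version of the computation that produced~\eqref{e:M1.42}. Writing $N^s_{v,z} := \cE^s_s\bigl([-v,0] - z\bigr)$, Lemma~\ref{l:5.2} lets me express
\begin{equation}
\nonumber
\bbE \bigl[\cC([-v,0])^2\bigr]
= \lim_{t \to \infty} \bbE \Bigl[ \Bigl(\sum_{\sigma_k \leq r_t} N^{\sigma_k}_{v,\wh{W}_{t,\sigma_k}} \Bigr)^{\!2} \,\Big|\, \cA_t \Bigr],
\end{equation}
where $\cA_t$ combines the event $\max_{k: \sigma_k \in [0,t]}\bigl(\wh W_{t,\sigma_k} + \wh h^{\sigma_k*}_{\sigma_k}\bigr) \leq 0$ with the bridge conditioning $\wh W_{t,0} = \wh W_{t,t} = 0$. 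Expanding the square produces a diagonal contribution from pairs of particles lying in the same sub-BBM and an off-diagonal contribution from pairs coming from distinct sub-BBMs, and Palm's formula for the Poisson process $\cN$ converts each into an integral over one (resp.\ two) branching times along the spine.

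The diagonal piece takes the same form as~\eqref{e:M1.34.1} but with the first moment of $N^s_{v,z}$ under the truncation $\{z + \wh h^{s*}_s \leq 0\}$ replaced by its second moment. The key input is the bound
\begin{equation}
\nonumber
\bbE\bigl[ \cE_s([-v,0] - z)^2 ;\; \wh h^{*}_s \leq -z\bigr] \leq C(v+1+z^-)^2 \rme^{2\sqrt{2} v - v^2/s},
\end{equation}
the second-moment analog of~\eqref{e:M1.41}. I would derive this by a many-to-two decomposition: the two particles share an ancestral line up to some time $r \leq s$, after which the two lines are conditionally independent. The common segment contributes a single Brownian trajectory and the two independent segments contribute two Brownian trajectories, each constrained by the truncation event. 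Plugging this bound and the three-point DRW estimate (an extension of Lemma~\ref{lem:15} that tracks $\wh W_{t,s}$) into the diagonal expression and performing the substitution $u = v^2/s$ converts the $s$-integral of $s^{-3/2} v^2 \rme^{-v^2/s}$ into $O(v)$, giving a diagonal contribution bounded by $C v \rme^{2\sqrt{2}v}$.

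The off-diagonal piece factorizes, by the conditional independence of distinct sub-BBMs given the spine trajectory and its branch times, into a double integral over $s_1 < s_2$ whose integrand is the product of two first-moment expressions as in~\eqref{e:M1.41}. Combined with the relevant four-point DRW density it yields a contribution of order $(\rme^{\sqrt{2}v})^2 = \rme^{2\sqrt{2}v}$, subdominant to the diagonal. The main obstacle is the second-moment bound for the truncated BBM level set: the many-to-two integrand must be uniformly dominated across all splitting times $r \in [0,s]$, and the truncation event $\{\wh h^{*}_s \leq -z\}$ has to be carried along carefully enough to preserve the Gaussian factor $\rme^{-v^2/s}$ that is responsible for the linear (rather than quadratic) growth in $v$ in the final estimate. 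Everything else is routine once the DRW estimates from Section~\ref{s:Reduction} are in hand.
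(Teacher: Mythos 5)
The paper itself does not prove this lemma; it is imported verbatim from Proposition~1.5 of~\cite{CHL17}, derived there using the very machinery (spinal decomposition, DRW reduction, Palm calculus) recalled in Section~\ref{s:Reduction}. Your proposal follows that same natural route -- pass to the spinal cluster $\cC^*_{t,r_t}$ via Lemma~\ref{l:7.0} and Lemma~\ref{l:5.2}, expand the square, split diagonal from off-diagonal, and apply the Palm--Campbell formula -- so the \emph{strategy} is correct and almost surely matches the cited reference.

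However there is a genuine gap, and you flag it yourself but do not close it: the entire burden of the argument is the asserted second-moment analog of~\eqref{e:M1.41},
\begin{equation*}
\bbE\bigl[\cE_s([-v,0]-z)^2 ;\; \wh h^{*}_s \leq -z\bigr] \leq C(v+1+z^-)^2\, \rme^{2\sqrt{2}v - v^2/s}.
\end{equation*}
This is not a routine consequence of the DRW toolkit in Section~\ref{s:Reduction}; it is the content that the paper outsources to~\cite{CHL17}, and ``I would derive this by a many-to-two decomposition'' is not a proof. Moreover, the bound as you have written it is suspicious: by Cauchy--Schwarz and~\eqref{e:M1.41} the second moment is \emph{at least} of order $v^2\rme^{2\sqrt{2}v - v^2/s}$, so you are claiming the truncated level set of the off-spine BBM is essentially concentrated -- the Gaussian factor $\rme^{-v^2/s}$ is the sharpest possible rate, not a safe overestimate like the $\rme^{-v^2/(16s)}$ appearing in Lemma~\ref{l:7.4}. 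Such a tight two-point bound for BBM under a maximum truncation is exactly the delicate point and must be established carefully (tracking the splitting time $r \in [0,s]$ and carrying the truncation event through both sub-trees), not postulated. Separately, the off-diagonal term requires a two-intermediate-point density estimate for the conditioned DRW; Lemma~\ref{lem:15} and Lemma~\ref{l:5.4} only control one intermediate time, so the claimed ``four-point DRW density'' would need its own argument. As it stands, the proposal reduces the statement to two unproven inputs whose forms are not obviously correct, and that is where the work of the lemma actually lives.
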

\noindent
Next, if $v \geq 0$ and $0 \leq s \leq t$ we define
\begin{equation}
	\label{e:394}
J_{t,v}^{\geq M}(s):= \cE_s^s \big([-v,0] - \wh{W}_{t,s}\big) \times 1_{\{|\wh{W}_{t,s}| \geq M\}} \times 
1_{\{\wh{h}^{s*}_s \leq -\wh{W}_{t,s}\}}  \times 1_{\{\max_{k : \sigma_k \in [0,t]} \big(\wh{W}_{t,{\sigma_k}} + \wh{h}^{\sigma_k*}_{\sigma_k}\big) \leq 0 \}}
\end{equation}
and set
\begin{equation}
j_{t,v}^{\geq M}(s) := \bbE \Big( J_{t,v}^{\geq M}(s) \, \Big|\,  \wh{W}_{t,0} = \wh{W}_{t,t} = 0 \Big) \,.
\end{equation}
We also abbreviate $J_{t,v}(s) \equiv J_{t,v}^{\geq 0}(s)$ and $j_{t,v}(s) \equiv j_{t,v}^{\geq 0}(s)$. The following is part of Lemma~5.5 in~\cite{CHL17}.
\begin{lem}
\label{l:7.4}
There exists $C, C' > 0$ such that for all $t \geq 0$, $0 \leq s \leq t/2$, $v \geq 0$ and $M \geq 0$,
\begin{equation}
\label{e:603}
j_{t,v}^{\geq M}(s) \leq C \frac{  \rme^{\sqrt{2} v} (v+1) }{t (s+1) \sqrt{s}} \times  \rme^{-C' M} \Big(\rme^{- \frac{v^2}{16s}} + \rme^{-\frac{v}{2}}\Big) \,.
\end{equation}
\end{lem}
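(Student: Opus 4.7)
The plan is to use the independence of $h^s$ from everything else in the definition of $J_{t,v}^{\geq M}(s)$, which holds because the fixed time $s$ is almost surely not a Poisson atom of $\cN$. First I would condition on $\wh{W}_{t,s}=z$ under the bridge conditioning $\wh{W}_{t,0}=\wh{W}_{t,t}=0$ to obtain the factorization
\[
j_{t,v}^{\geq M}(s)=\int_{|z|\geq M} p_{t,s}(z)\, A_s(z,v)\, B_{t,s}(z)\,\rmd z,
\]
where $p_{t,s}$ is the near-Gaussian conditional density of $\wh{W}_{t,s}$, $A_s(z,v)=\bbE\big[\cE^s_s([-v,0]-z);\,\wh{h}^{s*}_s\leq -z\big]$ depends only on $h^s$, and $B_{t,s}(z)$ is the conditional probability of the DRW max constraint given the additional conditioning $\wh{W}_{t,s}=z$.

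For $A_s(z,v)$ the plan is to invoke a uniform-in-$z$ version of Lemma~4.2 in~\cite{CHL17}, whose $|z|=O(1)$ case is recalled here in~\eqref{e:M1.41}; via the Many-to-one Lemma this reduces to a Brownian-motion-below-a-barrier computation and yields a bound of the form
\[
A_s(z,v)\leq C(v+1)\rme^{\sqrt{2}v}\,w_s(z,v),
\]
in which the weight $w_s(z,v)$ combines a Gaussian decay of type $\rme^{-(v+z^+)^2/(2s)}$ with an exponential entropic-repulsion penalty $\rme^{-\sqrt{2}z^+}$ coming from forcing the BBM maximum below $-z$ when $z>0$. For $B_{t,s}(z)$, the conditioning on $\wh{W}_{t,s}=z$ splits the bridge at $s$ into two independent sub-bridges on $[0,s]$ and $[s,t]$; applying Lemma~\ref{lem:15} on each, and using the asymptotic $f(w),g(w)\leq C(1+w^-)$, I would obtain $B_{t,s}(z)\leq C(1+z^-)^2/(s(t-s))$. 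Combining with the bridge density $p_{t,s}(z)\leq C s^{-1/2}\rme^{-(z+\gamma_{t,s})^2/(2s)}$ (valid for $s\leq t/2$; the case $s\geq t/2$ is symmetric up to relabeling) extracts the prefactor $(v+1)\rme^{\sqrt{2}v}/(t(s+1)\sqrt{s})$ appearing in the stated bound.

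It then remains to perform the $z$-integration and extract $\rme^{-v^2/(16s)}+\rme^{-v/2}$ and $\rme^{-C'M}$. The plan is to split the domain $|z|\geq M$ at $|z|=v/4$. On the small-$z$ region $M\leq |z|\leq v/4$, the combined quadratic exponent in $w_s(z,v)$ and in $p_{t,s}(z)$ is minimized near $z\approx -v/2$ and the elementary inequality $z^2+(v+z^+)^2\geq v^2/8$ available on $|z|\leq v/4$ produces the Gaussian factor $\rme^{-v^2/(16s)}$. On the large-$z$ region $|z|>v/4$ the exponential term $\rme^{-\sqrt{2}z^+}$ on $z>v/4$, respectively the Gaussian tail $\rme^{-z^2/(2s)}$ on $z<-v/4$, delivers $\rme^{-v/2}$. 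The further restriction $|z|\geq M$ then spares an additional $\rme^{-C'M}$: on the large-$z$ region directly from $\rme^{-\sqrt{2}|z|/2}$, and on the small-$z$ region by trading a fraction of the quadratic weight for a linear one of the form $\rme^{-cM}$ (uniformly in $s\geq 1$, by using the linear decay of $\rme^{-\sqrt{2}z^+}$ in $w_s$ rather than the weaker $\rme^{-M^2/(2s)}$ Gaussian tail).

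The principal obstacle is that the penalty $\rme^{-C'M}$ must be \emph{linear} in $M$ and uniform in $s\geq 1$---ruling out any reliance on the naive Gaussian tail $\rme^{-M^2/(2s)}$, which degenerates when $s\gg M^2$---while simultaneously preserving the precise constants $1/16$ and $1/2$ in the $v$-dependent factors. This careful interlocking of Gaussian and entropic-repulsion exponentials, together with a correct split of the integration domain, is the main technical content of the proof of Lemma~5.5 in~\cite{CHL17}, which the argument here follows closely.
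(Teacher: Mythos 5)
The paper itself does not prove Lemma~\ref{l:7.4}; it simply cites it as part of Lemma~5.5 in~\cite{CHL17}, so the comparison here is against an omitted proof. Your overall strategy --- conditioning on $\wh{W}_{t,s}=z$, factoring $j_{t,v}^{\geq M}(s)$ into the conditional density $p_{t,s}$, the first-moment BBM term $A_s(z,v)$, and the two independent sub-bridge ballot probabilities $B_{t,s}(z)$, and then integrating in $z$ with a split at $|z|=v/4$ --- is the natural decomposition and matches the intended route.

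There is, however, a genuine gap in the claimed bound on $A_s(z,v)$, and it is exactly where the $\rme^{-C'M}$ factor has to come from. Your weight $w_s(z,v)=\rme^{-(v+z^+)^2/(2s)}\rme^{-\sqrt{2}z^+}$ is constant in $z$ on $\{z<0\}$, reducing there to $\rme^{-v^2/(2s)}$. But for $z<0$ with $|z|<v$ the interval $[-v,0]-z=[-v+|z|,\,|z|]$ is a level set of effective depth $v-|z|$, not $v$; the Many-to-one computation underlying Lemma~4.2 of~\cite{CHL17} gives, up to polynomial factors in $|z|$,
\begin{equation}
\nonumber
A_s(z,v)\ \asymp\ (v+1)\,\rme^{\sqrt{2}(v-|z|)}\,\rme^{-(v-|z|)^2/(2s)}
\;=\;(v+1)\,\rme^{\sqrt{2}v}\cdot\rme^{-\sqrt{2}|z|}\,\rme^{-(v-|z|)^2/(2s)}\,,\qquad z<0,\ |z|<v\,.
\end{equation}
Both the exponential factor $\rme^{-\sqrt{2}|z|}$ and the \emph{shifted} Gaussian $\rme^{-(v-|z|)^2/(2s)}$ are absent from your $w_s$ when $z<0$. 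As you yourself note, the Gaussian tail $\rme^{-M^2/(2s)}$ from $p_{t,s}$ degenerates for $s\gg M^2$, and on $\{z<0\}$ your $\rme^{-\sqrt{2}z^+}\equiv 1$ supplies nothing; so on the region $z<0$, $M\le|z|\le v/4$ your scheme has no source for $\rme^{-C'M}$. Concretely, take $s\asymp v^2$ and $M=O(1)$: with your bound, $\int_{-v/4}^{-M}p_{t,s}(z)A_s(z,v)B_{t,s}(z)\,\rmd z$ comes out of order $(v+1)\rme^{\sqrt{2}v}/t$ (the $(1+z^-)^2\rme^{-z^2/(2s)}$ factor integrates to $\asymp s^{3/2}$, cancelling the $s^{-3/2}$ prefactor), missing \emph{both} the $\rme^{-C'M}$ and the $(s+1)^{-1}$ of the target. (Moreover, for $s\lesssim v^2$ the inequality $A_s(z,v)\le C(v+1)\rme^{\sqrt{2}v}\rme^{-v^2/(2s)}$ is simply false for $z<0$, since $\rme^{-(v-|z|)^2/(2s)}$ exceeds $\rme^{-v^2/(2s)}$ by a super-constant factor.) The fix is to carry the $z$-dependence correctly for $z<0$: the effective argument of the first moment is $v+z$, giving $\rme^{\sqrt{2}(v+z)^+}\rme^{-((v+z)^+)^2/(2s)}$ in place of $\rme^{\sqrt{2}v}\rme^{-v^2/(2s)}$. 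With that replacement, on $z<0$, $M\le|z|\le v/4$, the factor $\rme^{-\sqrt{2}|z|}$ delivers $\rme^{-C'M}$ and the elementary bound $z^2+(v-|z|)^2\ge v^2/2$ delivers $\rme^{-v^2/(4s)}\le\rme^{-v^2/(16s)}$, and your integration scheme then goes through.
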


\section{Proof of Main Results}
\label{s:3}
\subsection{Proof of Proposition~\ref{p:2.3}}
The proof of Proposition~\ref{p:2.3} will be based on the following two lemmas. 
\begin{lem}
\label{l:7.6}
The following quantity
\begin{equation}
\label{e:401}
\rme^{-\sqrt{2} v} 
\wt{\bbE}\Big(\cC_{t,r_t}^*\big([-v,0]\big) ;\; \max_{s\in[\eta v^2, \eta^{-1} v^2]} 	\big(h_{t-s}(X_{t-s}) - m_t + m_s\big) \leq -M \,\Big|\, \wh{h}^*_t = \wh{h}_t(X_t) = 0\Big) \
\end{equation}
tends to $0$ when $t \to \infty$, followed by $v \to \infty$, then $M \to \infty$ and finally $\eta \to 0$.
\end{lem}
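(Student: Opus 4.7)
\emph{Plan of attack.} My plan is to pass through the two reductions of Section~\ref{s:Reduction} to put everything in DRW language, then apply a Palm/Campbell identity to turn the sum over branching times $\sigma_k$ into a genuine integral in $s\in(0,r_t]$, and finally dominate pointwise using Lemma~\ref{l:7.4}. The gain is that on the middle window $s\in[\eta v^2,\eta^{-1}v^2]$ the truncation event forces $|\wh{W}_{t,s}|\ge M$, so we can invoke the $\rme^{-C'M}$ factor of Lemma~\ref{l:7.4}; on the outer windows we simply drop the truncation and exploit the Gaussian tail $\rme^{-v^2/(16s)}$.

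\emph{Steps 1--2: reduction and Palm.} Since $\cC^*_{t,r_t}([-v,0])$ is a functional of $\cC_{t,r_t}(X_t)$ and the truncation event in~\eqref{e:401} is a functional of $(h_{t-s}(X_{t-s})-m_t)_{s\le r_t}$, the joint statement of Lemma~\ref{l:5.2} identifies~\eqref{e:401} (dropping the $\rme^{-\sqrt 2 v}$ factor) with
\begin{equation}
\nonumber
\bbE\Big(\sum_{\sigma_k\le r_t}\cE_{\sigma_k}^{\sigma_k}\big([-v,0]-\wh{W}_{t,\sigma_k}\big);\; \wt{A}_M \,\Big|\, \Gamma_t\Big),
\quad \wt{A}_M:=\Big\{\max_{s\in[\eta v^2,\eta^{-1}v^2]}\wh{W}_{t,s}\le -M\Big\},
\end{equation}
where $\Gamma_t$ denotes the DRW conditioning $\{\max_k(\wh W_{t,\sigma_k}+\wh h^{\sigma_k*}_{\sigma_k})\le 0,\, \wh{W}_{t,0}=\wh{W}_{t,t}=0\}$; Lemma~\ref{lem:15} gives $\bbP(\Gamma_t\mid\wh{W}_{t,0}=\wh{W}_{t,t}=0)\ge c/t$. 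Campbell's formula applied to $\cN$, independent of $(W,H)$ with intensity $2\,ds$, then converts the sum into $2\int_0^{r_t}\!\bbE[J_{t,v}(s)\,1_{\wt{A}_M}\mid\wh{W}_{t,0}=\wh{W}_{t,t}=0]\,ds$; note that adding back the atom at $s$ does not alter the global maximum constraint inside $J_{t,v}(s)$, because the indicator $1_{\wh{h}^{s*}_s\le-\wh{W}_{t,s}}$ in~\eqref{e:394} already encodes precisely its contribution.

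\emph{Step 3: splitting and integration.} I split $(0,r_t]$ into the blocks $(0,\eta v^2)$, $[\eta v^2,\eta^{-1}v^2]$ and $(\eta^{-1}v^2,r_t]$. On the middle block $1_{\wt A_M}\le 1_{\{\wh W_{t,s}\le -M\}}\le 1_{\{|\wh W_{t,s}|\ge M\}}$, so $J_{t,v}(s)1_{\wt A_M}\le J_{t,v}^{\ge M}(s)$; on the outer blocks I bound $J_{t,v}(s)1_{\wt A_M}$ by $J_{t,v}(s)$. Since $r_t=o(t)$, Lemma~\ref{l:7.4} applies throughout, giving
\begin{equation}
\nonumber
t\!\int_0^{r_t}\!\!\bbE\big[J_{t,v}(s)1_{\wt A_M}\mid\wh W_{t,0}=\wh W_{t,t}=0\big]\,ds
\;\le\; C(v+1)\rme^{\sqrt{2}v}\Big\{\rme^{-C'M}\,I_{\mathrm{mid}}+I_{\mathrm{left}}+I_{\mathrm{right}}\Big\},
\end{equation}
where each $I_\bullet$ is the corresponding piece of $\int (s+1)^{-1}s^{-1/2}\bigl(\rme^{-v^2/(16s)}+\rme^{-v/2}\bigr)\,ds$. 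The substitution $u=v^2/(16s)$ yields $I_{\mathrm{mid}}\le C/v$, $I_{\mathrm{left}}\le C v^{-1}\rme^{-1/(16\eta)}$, and $I_{\mathrm{right}}\le C\sqrt{\eta}/v$, while the $\rme^{-v/2}$ contribution is subleading. Multiplying by $\rme^{-\sqrt 2 v}$ gives a bound by a constant times $\rme^{-1/(16\eta)}+\rme^{-C'M}+\sqrt{\eta}$, which vanishes in the iterated limit $t\to\infty$, $v\to\infty$, $M\to\infty$, $\eta\to 0$.

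\emph{Main obstacle.} The only subtle point is Step~2: checking that Campbell's formula extracts cleanly despite $\cN$ appearing nonlinearly inside the maximum constraint of $\Gamma_t$, and that adding back the atom at $s$ produces no extra term. Once this bookkeeping is settled, everything else is a routine application of Lemma~\ref{l:7.4} and an elementary Gaussian estimate.
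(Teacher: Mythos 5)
Your proposal reproduces the paper's argument essentially verbatim: reduce to DRW language via Lemmas~\ref{l:5.1}--\ref{l:5.2}, apply Palm--Campbell (with the correct observation that the Slivnyak--Mecke atom at $s$ is exactly absorbed by the indicator $1_{\{\wh{h}^{s*}_s\le-\wh{W}_{t,s}\}}$ in~\eqref{e:394}), split the integral at $\eta v^2$ and $\eta^{-1}v^2$, bound the middle block by $j^{\ge M}_{t,v}(s)$ via Lemma~\ref{l:7.4}, and finally normalize by $\wt\bbP(\wh h_t^*\le 0\mid\wh h_t(X_t)=0)\sim Ct^{-1}$ from Lemmas~\ref{l:5.1} and~\ref{lem:15}. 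Your estimates on the three integral pieces are in fact a bit tighter than the ones recorded in~\eqref{e:924} (you keep the Gaussian factor on the middle window and make the $\eta$-decay of the left window explicit), but the conclusion and the method are the same.
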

\begin{lem}
\label{l:7.7}
For all $\eta > 0$ and $M > 0$, there exists $\delta > 0$ such that,
\begin{equation}
\nonumber
\wt{\bbP} \Big(\max_{s\in[\eta v^2, \eta^{-1} v^2]} 
	\big(h_{t-s}(X_{t-s}) - m_t + m_s\big) > -M \,\Big|\, \wh{h}^*_t = \wh{h}_t(X_t) = 0 \Big)  \leq \frac{\delta^{-1}}{v} \,. 
\end{equation}
for all $v$ large enough and then $t$ large enough.
\end{lem}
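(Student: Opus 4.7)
The plan is to translate the spine-level event into one about the decorated random walk via Lemma~\ref{l:5.2}, and then invoke the entropic repulsion estimate Lemma~\ref{l:5.4} at the scale $s = \eta v^2$; the $1/\sqrt{s}$ decay provided by the latter gives exactly the $1/v$ bound claimed.

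For $t \geq \eta^{-1} v^2$, applying Lemma~\ref{l:5.2} with $r = \eta^{-1} v^2$ and marginalizing out the cluster coordinate yields the distributional identity
\begin{equation*}
\big(h_{t-s}(X_{t-s}) - m_t + m_s\big)_{s \in [0,\eta^{-1}v^2]} \Big|\, \wh{h}^*_t = \wh{h}_t(X_t) = 0
\;\stackrel{d}{=}\; \big(\wh{W}_{t,s}\big)_{s \in [0,\eta^{-1}v^2]} \Big|\, \mathcal{B}_t \,,
\end{equation*}
where $\mathcal{B}_t := \{\max_{k:\,\sigma_k \in [0,t]}(\wh{W}_{t,\sigma_k} + \wh{h}^{\sigma_k*}_{\sigma_k}) \leq 0\} \cap \{\wh{W}_{t,0} = \wh{W}_{t,t} = 0\}$ is precisely the conditioning event appearing in Lemma~\ref{l:5.4}. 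In particular, the probability in the statement of Lemma~\ref{l:7.7} equals
\begin{equation*}
\bbP\Big(\max_{s \in [\eta v^2,\,\eta^{-1}v^2]} \wh{W}_{t,s} > -M \,\Big|\, \mathcal{B}_t\Big).
\end{equation*}

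Now, once $t \geq 2 \eta^{-1} v^2$ one has $[\eta v^2, \eta^{-1} v^2] \subseteq [\eta v^2, t-\eta v^2]$, so by monotonicity of the maximum this quantity is bounded above by $\bbP(\max_{u \in [\eta v^2,\, t-\eta v^2]} \wh{W}_{t,u} \geq -M \,|\, \mathcal{B}_t)$. We may and do assume $M \geq 1$ (for smaller $M$, use the bound with $M$ replaced by $1$). Applying Lemma~\ref{l:5.4} with its parameter equal to $\eta v^2$ (which is $\geq 1$ for $v$ large enough), and writing $C(M) := \sup_{s \geq 1} \limsup_{t \to \infty} \sqrt{s}\,\bbP(\,\cdots \,|\, \mathcal{B}_t) < \infty$ for the finite constant the lemma supplies, we obtain for all $t$ large enough (depending on $v, \eta, M$),
\begin{equation*}
\bbP\Big(\max_{u \in [\eta v^2,\, t-\eta v^2]} \wh{W}_{t,u} \geq -M \,\Big|\, \mathcal{B}_t\Big) \leq \frac{C(M)+1}{\sqrt{\eta v^2}} = \frac{(C(M)+1)/\sqrt{\eta}}{v}\,.
\end{equation*}
Setting $\delta := \sqrt{\eta}/(C(M)+1)$, a quantity depending only on $\eta$ and $M$, completes the proof.

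No substantive obstacle arises, since the two essential ingredients (the spine-to-DRW reduction and the entropic repulsion bound with the correct $1/\sqrt{s}$ rate) are already recorded in Section~\ref{s:Reduction}. The one point requiring attention is the quantifier order in Lemma~\ref{l:5.4} — the supremum over $s \geq 1$ lives outside the $\limsup_t$ — which is precisely what permits specializing to the growing scale $s = \eta v^2$ at each fixed $v$ and only then letting $t \to \infty$.
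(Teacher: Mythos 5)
Your proof is correct and takes essentially the same route as the paper's: apply Lemma~\ref{l:5.2} to recast the spine event as a DRW event (ignoring the cluster coordinate), then invoke the entropic repulsion bound in Lemma~\ref{l:5.4} at scale $s = \eta v^2$ to extract the $1/\sqrt{s} = (\sqrt{\eta}\,v)^{-1}$ decay. You simply spell out the small points the paper leaves implicit --- enlarging $[\eta v^2,\eta^{-1}v^2]$ to $[\eta v^2, t-\eta v^2]$ once $t$ is big enough, replacing $M$ by $\max\{M,1\}$ to match the hypothesis $M\geq 1$ of Lemma~\ref{l:5.4}, and noting that the quantifier order $\sup_{s\geq 1}\limsup_{t\to\infty}$ is what lets one take $s$ depending on $v$ before sending $t\to\infty$ --- all of which is fine.
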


Let us first prove Proposition~\ref{p:2.3}.
\begin{proof}[Proof of Proposition~\ref{p:2.3}]
The second statement in the proposition follows trivially from~\eqref{e:29} and Markov's inequality. It therefore remains to show~\eqref{e:33.2}. Given $t \geq 0$, $v \geq 0$, $M > 0$ and $\eta > 0$, define the event
\begin{equation}
\cB :=  \Big\{ \max_{s\in[\eta v^2, \eta^{-1} v^2]} 
	\big(h_{t-s}(X_{t-s}) - m_t + m_s\big) > -M \Big\} \,.
\end{equation}
Thanks to Lemma~\ref{l:7.6} and Lemma~\ref{l:7.7}, for any $\epsilon > 0$ there exist $\eta > 0$, $M > 0$ and $\delta' > 0$ such that for all $v$ and then $t$ large enough,
\begin{equation}
\wt{\bbE} \Big(\cC_{t,r_t}^*\big([-v,0]\big) ;\; \cB^\rmc \, \Big| \, \wh{h}^*_t = \wh{h}_t(X_t) = 0 \Big)
\leq \frac{\epsilon}{2} \, \rme^{\sqrt{2} v} 
\quad, \qquad
\wt{\bbP} \Big(\cB\,\Big|\, \wh{h}^*_t = \wh{h}_t(X_t) = 0\Big) \leq \frac{1}{2\delta' v} \,. 
\end{equation}
Next let $\delta := \epsilon \delta'$ and define also the event
$\cA: = \big\{\cC_{t,r_t}^*\big([-v,0]\big) > \delta v \rme^{\sqrt{2}v} \big\}$.
Then,
\begin{equation}
\nonumber
\begin{split}
\wt{\bbE} \Big(\cC_{t,r_t}^* &\big([-v,0] \big)  ;\; \cA^\rmc \, \Big| \, \wh{h}^*_t = \wh{h}_t(X_t) = 0  \Big) \\ & \leq 
\wt{\bbE} \Big(\cC_{t,r_t}^*\big([-v,0]\big) ;\; \cB^\rmc \, \Big| \wh{h}^*_t = \wh{h}_t(X_t) = 0  \Big)
+
\wt{\bbE} \Big(\cC_{t,r_t}^*\big([-v,0]\big) ;\; \cB \cap \cA^\rmc \, \Big| \, \wh{h}^*_t = \wh{h}_t(X_t) = 0 \Big) 
\\ &\leq
 \frac{\epsilon \rme^{\sqrt{2} v}}{2}  + \frac{1}{2 \delta' v} \delta'\epsilon   v \rme^{\sqrt{2} v} 
\leq \epsilon \rme^{\sqrt{2} v} \,,
\end{split}
\end{equation}
Now take $t \to \infty$ in the last display. Then by Lemma~\ref{l:7.0} and the bounded convergence theorem,~\eqref{e:33.2} follows for all large enough $v$ such that $\nu$-almost-surely the point $-v$ is not charged by $\cC$. Removing this stochastic continuity restriction requires a standard argument, the kind of which was used in many of the proofs in~\cite{CHL17} (e.g., the proof of Proposition~1.5). We therefore omit further details.
\end{proof}

\begin{proof}[Proof of Lemma~\ref{l:7.6}]
Using Lemma~\ref{l:5.1} and Lemma~\ref{l:5.2} and abbreviating
\begin{equation}
\cR_t \equiv \Big\{ \max_{k : \sigma_k \in [0,t]} \big(\wh{W}_{t,{\sigma_k}} + \wh{h}^{\sigma_k*}_{\sigma_k}\big) \leq 0 \,,\, \wh{W}_{t,0} = \wh{W}_{t,t} = 0 \Big\} \,,
\end{equation}
we can write the product of the expectation in~\eqref{e:401} with $\wt{\bbP}\big(\wh{h}^*_t \leq 0\,\big|\, \wh{h}_t(X_t) = 0 \big)$ as
 \begin{multline}
\label{e:3.2}
\bbE \Big(\int_{s=0}^{r_t} J_{t,v}(s) \cN(\rmd s) \;;\; \max_{s \in [\eta v^2, \eta^{-1} v^2]} 
	\wh{W}_{t,s} \leq -M \,\Big|\, \cR_t \Big)  \\
\leq 
\bbE \Big(\int_{s=0}^{r_t} \big(J_{t,v}(s) 1_{[\eta v^2, \eta^{-1} v^2 ]^\rmc}(s)
+ J_{t,v}^{\geq M}(s) 1_{[\eta v^2, \eta^{-1} v^2]} \big)\cN(\rmd s) \,\Big|\, \cR_t \Big) \,,
\end{multline}
where $J_{t,v}^{\geq M}(s)$ is as in~\eqref{e:394}. Then, by the Palm-Campbell Theorem, the right hand side above is equal to
\begin{equation}
\label{e:3.3}
2 \int_{s=0}^{r_t} \Big(j_{t,v}(s) 1_{[\eta v^2,  \eta^{-1}v^2 ]^\rmc} +
j^{\geq M}_{t,v}(s) 1_{[\eta v^2,  \eta^{-1} v^2]} \Big) \rmd s \,.
\end{equation}
Using Lemma~\ref{l:7.4} we can upper bound the above integral by $C t^{-1} \rme^{\sqrt{2} v} (v+1)$ times
\begin{equation}
\label{e:924}
\begin{split}
\int_{s=0}^\infty & \frac{\rme^{-v^2/(16s)}  + \rme^{-v/2}}{ \sqrt{s} (s+1)}  
\Big(1_{ \{ s \in [\eta v^2, \eta^{-1}v^2]^\rmc \}} + \rme^{-C' M} 1_{\{ s \in [ \eta v^2, \eta^{-1}v^2] \}} \Big) \rmd s \\
& \ 
\leq \int_{0}^\infty  \! \frac{\rme^{-\frac{v}{2}}}{ \sqrt{s} (s\!+\!1)}  \rmd s 
\! +\! \int_{0}^{\eta v^2} \!  \frac{\rme^{-v^/(16s)}}{ s^{3/2}}  \rmd s  
\!+ \! \rme^{-C'M} \! \int_{\eta v^2}^\infty  s^{-\frac{3}{2}}  \rmd s 
\!+\! \int_{\eta^{-1} v^2}^\infty  s^{-\frac{3}{2}}  \rmd s  \\
& \ \leq 
C \bigg(\rme^{-\frac{v}{2}} + \frac{1}{v}  + \frac{\rme^{-C' M}}{v  \sqrt{\eta} } + \frac{ \sqrt{\eta}}{v} \,  \bigg) \,.
\end{split}
\end{equation}
Altogether~\eqref{e:3.3} is at most  $C t^{-1} \rme^{\sqrt{2}v} \big(\rme^{-v/4} + \rme^{-C' M}\eta^{-\frac{1}{2}} + \eta^{\frac{1}{2}}\big)$. Since $\wt{\bbP}\big(\wh{h}^*_t \leq 0\,\big|\, \wh{h}_t(X_t) = 0 \big) \sim Ct^{-1}$ as $t \to \infty$, thanks to the second part of Lemma~\ref{l:5.1} with $u=w=0$ and Lemma~\ref{lem:15}, the desired statement follows.
\end{proof}

\begin{proof}[Proof of Lemma~\ref{l:7.7}]
Thanks to Lemma~\ref{l:5.2} (ignoring the distribution of $\cC_{t,r}(X_t)$), the conditional probability in the statement of the lemma is equal to 
\begin{equation}
\nonumber
\bbP \Big( \max_{s \in [\eta v^2, \eta^{-1} v^2]} \wh{W}_{t,s} > -M  \,\Big|\,  	
\max_{k : \sigma_k \in [0,t]} \big(\wh{W}_{t,{\sigma_k}} + \wh{h}^{\sigma_k*}_{\sigma_k}\big) \leq 0 \,,\, \wh{W}_{t,0} = \wh{W}_{t,t} = 0 \Big) \,.
\end{equation}
Invoking Lemma~\ref{l:5.4}, then gives the desired upper bound with some $\delta > 0$, depending on $\eta$, $M$ and all $v$ and then $t$ large enough.
\end{proof}

\subsection{Proofs of Theorem~\ref{t:15} and Corollary~\ref{c:1.3}}
The proofs of Theorem~\ref{t:15} will be based on the following lemma.
\begin{lem}
\label{l:15}
For any $\epsilon > 0$, there exists $\delta > 0$ such that for all $\alpha \in (0,1)$,
\begin{equation}
\label{e:59}
	\lim_{v\to\infty}\mathbb{P} \left(
	\frac{\cE \big([-v, \infty) ;\;  [-\alpha v, \infty) \times \bbM ,\,  F_\delta(-v)^\rmc \big)}
	{C_\star Z \alpha v \rme^{\sqrt{2} v} } \leq \epsilon \right) = 1 \,.
\end{equation}
Moreover, there exists $C>0$ such that for any $\delta > 0$ and $\alpha \in (0,1)$,
\begin{equation}
\label{e:58}
	\lim_{v\to\infty}\mathbb{P}\left(\frac{\wh{\cE}\big([-\alpha v, \infty) \times \bbM   \cap F_\delta(-v) \big)}
		{Z\rme^{\sqrt{2} \alpha v}/\sqrt{2}} <  \frac{C} {\delta (1-\alpha)} v^{-1} \right)=1\,.
\end{equation}
\end{lem}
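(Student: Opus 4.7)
My plan is to work conditionally on $Z$ and exploit the Poissonian structure $\wh{\cE}\mid Z \sim \mathrm{PPP}(Z\rme^{-\sqrt{2}u}\rmd u \otimes \nu)$, reducing both statements to first-moment estimates and then sharpening via a Chebyshev/Poisson-concentration step. Proposition~\ref{p:2.3} supplies the two key inputs: the tail bound on the fat event for~\eqref{e:58}, and the truncated-mean bound on the non-fat contribution for~\eqref{e:59}.

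For the second statement, the count $N := \wh{\cE}([-\alpha v, \infty) \times \bbM \cap F_\delta(-v))$ is, conditional on $Z$, Poisson. Inserting the tail bound $\bbP(\cC([-w,0]) > \delta w \rme^{\sqrt{2}w}) \leq C/(\delta w)$, substituting $w=v+u$, and using $1/w \leq 1/((1-\alpha)v)$ on the integration domain yields
\[
\bbE[N \mid Z] \;\leq\; \tfrac{CZ\rme^{\sqrt{2}v}}{\delta}\!\int_{(1-\alpha)v}^\infty \tfrac{\rme^{-\sqrt{2}w}}{w}\,\rmd w \;\leq\; \tfrac{C'}{\delta(1-\alpha)v} \cdot \tfrac{Z\rme^{\sqrt{2}\alpha v}}{\sqrt{2}} =: \mu_\star.
\]
Since $\mu_\star \to \infty$ almost surely as $v \to \infty$ (on $\{Z > 0\}$, which has probability one), Chebyshev for the Poisson variable $N$ gives $N \leq 2\mu_\star$ with probability tending to $1$, which is~\eqref{e:58} with $C := 2C'$.

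For the first statement, I split the $u$-range at a fixed threshold $K$ into the bulk part $S^K$ on $u \in [-\alpha v, K]$ and a tail on $u > K$. For $S^K$, Campbell's formula combined with Proposition~\ref{p:2.3} (applied with $\epsilon_0 := \epsilon C_\star/4$, which fixes $\delta$) gives $\bbE[S^K \mid Z] \leq \epsilon_0 Z \rme^{\sqrt{2}v}(\alpha v + K) \leq \tfrac{\epsilon}{2} C_\star Z \alpha v \rme^{\sqrt{2}v}$ for $v$ large. To concentrate $S^K$ around its mean I use Lemma~\ref{l:12} inside the Poisson variance formula,
\[
\mathrm{Var}(S^K \mid Z) \;\leq\; Z \!\int_{-\alpha v}^K \rme^{-\sqrt{2}u} \bbE[\cC([-w,0])^2]\,\rmd u \;\leq\; CZ \rme^{\sqrt{2}v}\!\int_0^{v+K}(w+1) \rme^{\sqrt{2}w}\,\rmd w \;\lesssim\; Zv \rme^{2\sqrt{2}v}
\]
for fixed $K$. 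The resulting standard deviation $O(\sqrt{Zv}\,\rme^{\sqrt{2}v})$ is $o(Z\alpha v\rme^{\sqrt{2}v})$, so Chebyshev yields $S^K \leq \epsilon C_\star Z\alpha v\rme^{\sqrt{2}v}$ with probability tending to $1$.

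The main obstacle is that one cannot simply take $K = \infty$: the per-cluster bound $\bbE[\cC([-w,0]);\cC\text{ not fat}] \leq \epsilon_0 \rme^{\sqrt{2}w}$ has the same exponential order as the full mean, so integration against $\rme^{-\sqrt{2}u}\,\rmd u$ over $u \in [-\alpha v, \infty)$ diverges and the above first/second moment arguments fail outright on the full range. I resolve this by handling the tail separately: the expected number of local maxima with $u > K$ is $Z\rme^{-\sqrt{2}K}/\sqrt{2}$, so after truncating $Z$ by a large constant (justified since $Z < \infty$ a.s.) one can choose a $v$-independent $K = K(\eta)$ so that with probability at least $1 - \eta$ there are no such maxima and the tail contribution vanishes identically. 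Combining the two pieces and letting $\eta \to 0$ after $v \to \infty$ finishes~\eqref{e:59}.
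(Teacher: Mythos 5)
Your proposal is correct and follows essentially the same route as the paper: work conditionally on $Z$, use the Poissonian structure together with Proposition~\ref{p:2.3} for the first-moment bounds (truncated mean for~\eqref{e:59}, tail probability for~\eqref{e:58}), control the variance of the bulk via Lemma~\ref{l:12} and Campbell's formula, and finish with Chebyshev. The only genuine (and minor) difference is the treatment of the far-tail $u$-range in~\eqref{e:59}: you truncate at a fixed $K=K(\eta)$ and pay an extra $\eta\to 0$ limit after first truncating $Z$, whereas the paper sets $z=\sqrt{\log v}\to\infty$ so that $\wh{\cE}\big([z,\infty)\times\bbM\big)=0$ eventually almost surely and the tail contribution vanishes outright for large $v$; both devices are valid and play the same role.
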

\begin{proof}
Let $\alpha \in (0,1)$. Given $-\infty < -v < w < z \leq \infty$ and $\delta > 0$, define
\begin{equation}
\label{e:43}
\textstyle
H_v (w,z) := \sum_{(u, \cC) \in \wh{\cE}} \ \cC \big( [-v - u, 0] \big) \times 1_{[w, z]}(u)  \,.
\end{equation}
and
\begin{equation}
\label{e:60.1}
Q_{v, \delta}(w, z) :=
\sum_{(u, \cC) \in \wh{\cE}} \cC \big([-v-u, 0] \big) \times 1_{[w,z)}(u) 
	\times 1_{\{\cC ( [-v-u, 0] ) \leq 
		\delta (v+u) \rme^{\sqrt{2}(v+u)}\}} 
 \,.
\end{equation}
Clearly $Q_{v, \delta}(w, z) \leq H_v(w,z)$. Therefore, setting $z:= \sqrt{\log v}$ and $w:= -\alpha v$, we can bound the numerator in~\eqref{e:59} by $Q_{v, \delta} (w, z) + H_v (z, \infty)$.

Since conditional on $Z$, the intensity measure governing the law of $\wh{\cE}$ is finite on $[0,\infty) \times \bbM$ almost surely, we must have $\wh{\cE}([z, \infty) \times \bbM) = 0$  for all large enough $z$. This shows that 
\begin{equation}
\label{e:4.9}
H_v(z,\infty) \big(Zv\rme^{\sqrt{2}v}\big)^{-1} \longrightarrow 0
\quad \text{as } v \to \infty \text{ a.s.}
\end{equation}
At the same time, for any $\epsilon > 0$, thanks to the first part of Proposition~\ref{p:2.3}, we may find $\delta >0$ such that
\begin{equation}
\nonumber
\begin{split}
\bbE \big( Q_{v, \delta} (w, z)\,\big|\, Z \big) & = 
\int_{u=w}^z 
	\bbE \Big( \cC([-v-u, 0]) ;\; \cC([-v-u, 0]) \leq \delta (v+u) \rme^{\sqrt{2}(v+u)} \Big) Z \rme^{-\sqrt{2} u} \rmd u \\
& \leq \int_{u=w}^z 
	\epsilon \rme^{\sqrt{2}(v+u)} Z \rme^{-\sqrt{2} u} \rmd u 
\leq 2\epsilon Z \alpha v \rme^{\sqrt{2}v} \,.
\end{split}	
\end{equation}
In addition, thanks to Lemma~\ref{l:12},
\begin{equation}
\nonumber
\begin{split}
\Var \big( Q_{v, \delta} (w, z)\,\big|\, Z \big) & =
\int_{u=w}^z 
	\bbE \Big( \cC([-v-u, 0]) ;\; \cC([-v-u, 0]) > \delta (v+u) \rme^{\sqrt{2}(v+u)}\Big)^2 Z 
	\rme^{-\sqrt{2} u} \rmd u \\
& \leq 
\int_{u=w}^z 
	\bbE \Big( \cC([-v-u, 0])\Big)^2 Z \rme^{-\sqrt{2} u} \rmd u 
\, \leq\, C Z v \rme^{2 \sqrt{2} v + \sqrt{2} z} \,.
\end{split}	
\end{equation}

It follows by Chebyshev's inequality that
\begin{equation}
\bbP \Big(Q_v(w,z) > 4 \epsilon Z \alpha v \rme^{\sqrt{2}v}\, \Big|\, Z \Big) 
\overset{v \to \infty} \longrightarrow 0 \,,
\end{equation}
almost surely. By the bounded convergence theorem, the above limit holds also for the unconditional probability. Together with~\eqref{e:4.9} and the union bound, this shows that as $v \to \infty$,
\begin{equation}
\bbP \Big(Q_v(w,z) + H_v(z, \infty) > 8 \epsilon Z \alpha v \rme^{\sqrt{2}v} \Big) 
\longrightarrow 0 \,.
\end{equation}
Since the quantity on the left hand side in the probability above dominates the one in the numerator of~\eqref{e:59}, this gives~\eqref{e:59} with $8\epsilon/C_\star$ in place of $\epsilon$. 

Turning to the second statement of the lemma, by the definition of $\wh{\cE}$, for any $\delta > 0$ and $\alpha \in (0,1)$, the law of the first numerator in~\eqref{e:58}, conditional on $Z$, is Poisson with parameter given by
\begin{equation}
\label{e:153}
\lambda_{\alpha, \delta}(v)=\int_{u=-\alpha v}^\infty 
	\bbP \Big(\cC([-v-u, 0]) > \delta (v+u) \rme^{\sqrt{2}(v+u)}\Big) Z \rme^{-\sqrt{2} u} \rmd u \,.
\end{equation}
Therefore by the second part of Proposition~\ref{p:2.3},
\begin{equation}
\label{e:64.1.1}
\lambda_{\alpha, \delta}(v) \leq \int_{u=-\alpha v}^\infty 
	\frac{C}{\delta(v+u)} Z \rme^{-\sqrt{2} u} \rmd u 
\leq \frac{C'}{\delta (1-\alpha)} v^{-1}Z\rme^{\sqrt{2} \alpha v} /\sqrt{2} \,,
\end{equation}
for some $C' > 0$.
Then by Chebyshev's inequality, conditional on $Z$, the probability of the complement of the event in~\eqref{e:58} with $C := 2C'$ is at most
\begin{equation}
\frac{\lambda_{\alpha, \delta}(v)}{\left(\frac{2C'}{\delta(1-\alpha)}v^{-1} Z  \rme^{\sqrt{2} \alpha v} /\sqrt{2} -\lambda_{\alpha, \delta}(v)\right)^2} \,,
\end{equation}
which goes to $0$ as $v \to \infty$, for $\bbP$-almost every $Z$. The same then also holds for the unconditional probability, thanks again to the bounded convergence theorem.
\end{proof}

We can now prove Theorem~\ref{t:15}.
\begin{proof}[Proof of Theorem~\ref{t:15}]
Given $\epsilon > 0$, we use Lemma~\ref{l:15} to find $\delta > 0$ such that for any $\alpha \in (0,1)$ \eqref{e:59} holds. We then use~\eqref{e:1.8},~\eqref{e:1.10} and~\eqref{e:37} to claim that also
\begin{equation}
\label{e:402}
\frac{\cE \big([-v,\infty) ;\; [-\alpha v, \infty) \times \bbM \big)}{C_\star Z \alpha v\rme^{\sqrt{2}v}} > \frac{3}{4}
 \quad \text{ and } \quad
\frac{\cE^*\big([-\alpha v,\infty))}{Z \rme^{\sqrt{2} \alpha v} /\sqrt{2}} > \frac{3}{4}
 \end{equation}
holds with probability tending to $1$ as $v \to \infty$.

Dividing both the numerator and denominator of~\eqref{e:159} by $C_\star Z \alpha v \rme^{\sqrt{2}v}$, we now observe that whenever the event in~\eqref{e:59} and the first event in~\eqref{e:402} hold, we must also have~\eqref{e:159} with $4\epsilon/3$ in place of $\epsilon$.
At the same time, observing that $\cE^*\big([-\alpha v,\infty) ;\; F_\delta(-v)\big) = 
\wh{\cE}\big(\big([-\alpha v, \infty) \times \bbM\big) \cap F_{\delta}(-v)\big)$ which follows by definition, we now divide both the numerator and denominator of~\eqref{e:158} by $Z \rme^{\sqrt{2} \alpha v}/\sqrt{2}$. We then see that whenever the event in~\eqref{e:58} and the second event in~\eqref{e:402} hold, we must also have~\eqref{e:158} with $4C/3$ in place of $C$. Renaming $\epsilon$ and $C$, and using the union bound, we complete the proof of the theorem for $\cE$ and $\cE^*$.

To obtain the finite $t$ analogs (with $t \to \infty$), it is sufficient to argue that all random quantities on the left hand sides of~\eqref{e:159} and~\eqref{e:158} are the joint weak limits of their respective finite time analogs as $t \to \infty$. This in turn follows from~\eqref{e:N7} and standard arguments, the kind of which were used in many of the proofs in~\cite{CHL17} (e.g., the proof of Theorem~1.1). We therefore omit further details.
\end{proof}

\begin{proof}[Proof of Corollary~\ref{c:1.3}]
We shall only show that statement for $\cE^*$ and $\cE$ as the argument for the case involving $\cE_t$ and $\cE^*_t$ is almost identical. Given $\epsilon > 0$, we let $\delta$ be given by Theorem~\ref{t:15} for $\epsilon/2$, choose $\alpha: = 1-\epsilon/3$ and set 
\begin{equation}
G_\epsilon(-v) := \big([-\alpha v, \infty) \times \bbM\big) \cap  F_\delta(-v) \,.
\end{equation}
Then, the first part of Theorem~\ref{t:15}, together with~\eqref{e:37} and the union bound show that 
\begin{equation}
\cE \big([-v, \infty) ;\;  G_\epsilon(-v) \big) 
\, \geq \, (1-\epsilon/2) 
\cE \big([-v, \infty) ;\;  [-\alpha v, \infty) \times \bbM \big) 
\, \geq \, (1-\epsilon)
\cE \big([-v, \infty)\big) \,,
\end{equation}
with probability at least $1-\epsilon/2$ whenever $v$ is large enough.
At the same time, the second part of Theorem~\ref{t:15} shows that 
\begin{equation}
\cE^*\big([-v, \infty) ;\; G_\epsilon(-v)\big) \leq \epsilon\, \cE^*\big([-\alpha v, \infty)\big)
< \epsilon\, \cE^*\big([-v, \infty)\big) \,,
\end{equation}
with probability at least $1-\epsilon/2$, again whenever $v$ is large enough. A final application of the union bound then completes the proof.
\end{proof}

\section*{Acknowledgments} 
The work of A.C. was supported by the Swiss National Science Foundation~200021\underline{{ }{ }}163170. The work of O.L. was supported by the Israeli Science Foundation grant no. 1382/17 and by the German-Israeli Foundation for Scientific Research and Development grant no. I-2494-304.6/2017.

\bibliographystyle{abbrv}
\bibliography{GBBStructure}

\begin{thebibliography}{10}

\bibitem{ABBS}
E.~A\"\i{}d\'ekon, J.~Berestycki, E.~Brunet, and Z.~Shi.
\newblock Branching {B}rownian motion seen from its tip.
\newblock {\em Probab.~Theory Relat.~Fields}, 157:405--451, 2013.

\bibitem{ABK_E}
L.-P. Arguin, A.~Bovier, and N.~Kistler.
\newblock The extremal process of branching {B}rownian motion.
\newblock {\em Probab.~Theory Relat.~Fields}, 157:535--574, 2013.

\bibitem{B_C}
M.~Bramson.
\newblock Convergence of solutions of the {K}olmogorov equation to travelling
  waves.
\newblock {\em Mem. Amer. Math. Soc.}, 44(285):iv+190, 1983.

\bibitem{Bramson1978maximal}
M.~D. Bramson.
\newblock Maximal displacement of branching {B}rownian motion.
\newblock {\em Comm. Math. Phys.}, 31(5):531--581, 1978.

\bibitem{CHL17}
A.~Cortines, L.~Hartung, and O.~Louidor.
\newblock The structure of extreme level sets in branching {B}rownian motion.
\newblock {\em To appear in Annals of Probability}, 2018.

\bibitem{CHL17Supp}
A.~Cortines, L.~Hartung, and O.~Louidor.
\newblock Decorated random walk restricted to stay below a curve (supplement
  material).
\newblock {\em To appear in Annals of Probability as an online appendix (also
  an arXiv preprint)}, 2019.

\bibitem{HarRob2011}
S.~C. Harris and M.~I. Roberts.
\newblock The many-to-few lemma and multiple spines.
\newblock {\em Ann. Inst. H. Poincaré Probab. Statist.}, 53(1):226--242, 02
  2017.

\bibitem{Watanabe1}
N.~Ikeda, M.~Nagasawa, and S.~Watanabe.
\newblock Branching {M}arkov processes. {I}.
\newblock {\em J. Math. Kyoto Univ.}, 8:233--278, 1968.

\bibitem{Watanabe2}
N.~Ikeda, M.~Nagasawa, and S.~Watanabe.
\newblock Branching {M}arkov processes. {II}.
\newblock {\em J. Math. Kyoto Univ.}, 8:365--410, 1968.

\bibitem{Watanabe3}
N.~Ikeda, M.~Nagasawa, and S.~Watanabe.
\newblock Branching {M}arkov processes. {III}.
\newblock {\em J. Math. Kyoto Univ.}, 9:95--160, 1969.

\bibitem{LS}
S.~P. Lalley and T.~Sellke.
\newblock A conditional limit theorem for the frontier of a branching
  {B}rownian motion.
\newblock {\em Ann. Probab.}, 15(3):1052--1061, 1987.

\bibitem{McKean}
H.~P. McKean.
\newblock Application of {B}rownian motion to the equation of
  {K}olmogorov-{P}etrovskii-{P}iskunov.
\newblock {\em Comm. Pure Appl. Math.}, 28(3):323--331, 1975.

\end{thebibliography}

\end{document}